\DeclareSymbolFont{cyrletters}{OT2}{wncyr}{m}{n}
\DeclareMathOperator{\Mat}{Mat}
 \DeclareMathOperator{\wt}{wt}
\DeclareMathOperator{\Ext}{Ext}  \DeclareMathOperator{\SL}{SL}
\DeclareMathOperator{\Li}{Li}    
\DeclareMathOperator{\rank}{rank}
\DeclareMathOperator{\dep}{dep}
\title[On lower bounds of the dimensions of multizeta values]{O\MakeLowercase{n lower bounds of the dimensions of multizeta values in positive characteristic}}
\author{Yen-Tsung Chen and Ryotaro Harada}
\address{Department of Mathematics, National Tsing Hua University, Hsinchu City 30042, Taiwan R.O.C.}
\email{ytchen.math@gmail.com}
\address{National Center for Theoretical Sciences, No.101, Kuang Fu Road, Sec. 2, Hsinchu City 30043 Taiwan R.O.C}
\email{ryotaro@ncts.ntu.edu.tw}
\date{December 1, 2020}
\newtheorem{thm}{Theorem}[section]
\newtheorem{lem}[thm]{Lemma}
\newtheorem{cor}[thm]{Corollary}
\newtheorem{prop}[thm]{Proposition}
\theoremstyle{remark}
\subjclass[2010]{11M38 (primary), 11J72, 11J93}
\keywords{multizeta values, $t$-module, $t$-motive}
\theoremstyle{definition}
\newtheorem{defn}[thm]{Definition}
\newtheorem{rem}[thm]{Remark}
\newtheorem{eg}[thm]{Example}
\numberwithin{equation}{section}
\begin{document}
\bibliographystyle{amsalpha+}

\begin{abstract}
In this paper, we study the linear independence of special values, including the positive characteristic analogue of multizeta values, alternating multizeta values and multiple polylogarithms, at algebraic points.
Consequently, we establish linearly independent sets of these values with the same weight indices and a lower bound on the dimension of the space generated by depth $r>2$ multizeta values of the same weight in positive characteristic.
\end{abstract}

\maketitle
\tableofcontents
\setcounter{section}{0}

\section{Introduction}

\subsection{Multizeta values in positive characteristic}
    Let $\mathbb{F}_q$ be a fixed finite field with $q$ elements, where $q$ is the power of a prime number $p$. 
    Let $\mathbb{P}^1$ be the projective line defined over $\mathbb{F}_q$ with a fixed point at infinity $\infty\in\mathbb{P}^1(\mathbb{F}_q)$. 
    Let $A$ be the ring of regular functions away from $\infty$, and let $k$ be its fraction field. Let $k_\infty$ be the completion of $k$ at $\infty$, and let $\mathbb{C}_\infty$ be the completion of a fixed algebraic closure of $k_\infty$. Let $\theta$ be a variable. We identify $A$ with the polynomial ring $\mathbb{F}_q[\theta]$ and $k$ with the rational function field $\mathbb{F}_q(\theta)$.
    
    The $\infty$-adic multizeta values ($\infty$-adic MZVs) 
    were defined by Thakur in \cite{Th04} as a generalization of Carlitz zeta values \cite{Ca35}. For any index $\mathfrak{s}=(s_1,\dots,s_r)\in\mathbb{Z}_{>0}^r$, $\infty$-adic MZVs are defined by the following series:
    \begin{equation}\label{Eq:Definition_of_MZVs}
        \zeta_A(\mathfrak{s}):=\sum\frac{1}{a_1^{s_1}\cdots a_r^{s_r}}\in k_\infty,
    \end{equation}
where the sum is over $(a_1,\dots,a_r)\in A^r$ with $a_i$ monic and $\deg a_1> \deg a_2>\cdots >\deg a_r$. The second author introduced and studied  $\infty$-adic alternating multizeta values ($\infty$-adic AMZVs) \cite{H20}, which are a generalization of $\infty$-adic MZVs defined by the following series:
    \begin{equation}\label{Eq:Definition_of_AMZVs}
        \zeta_A(\mathfrak{s};\boldsymbol{\epsilon}):=\sum\frac{\epsilon_1^{\deg a_1}\cdots\epsilon_r^{\deg a_r}}{a_1^{s_1}\cdots a_r^{s_r}}\in k_\infty,
    \end{equation}
where $\boldsymbol{\epsilon}:=(\epsilon_1, \ldots, \epsilon_r) \in \left(A^\times\right)^r=\left(\mathbb{F}_q^\times\right)^r$ and the sum is over $(a_1,\dots,a_r)\in A^r$ with $a_i$ monic and $\deg a_1> \deg a_2>\cdots >\deg a_r$. The weight and depth of the presentation $\zeta_A(\mathfrak{s})$ and $\zeta_A(\mathfrak{s};\boldsymbol{\epsilon})$ are defined by $\wt(\mathfrak{s}):=s_1+\cdots+s_r$ and $\dep(\mathfrak{s}):=r$, respectively. Note that both $\infty$-adic MZVs and AMZVs are introduced as positive characteristic counterparts of real-valued multizeta values (real-valued MZVs) and real-valued alternating multizeta values (real-valued AMZVs), which researchers have conducted a variety of interesting studies (for details, see \cite{Zh16}).

    The $\infty$-adic MZVs (resp. $\infty$-adic AMZVs) are non-vanishing according to \cite{Th09a} (resp. \cite{H20}). Additionally, both $\infty$-adic MZVs \cite{AT09} and $\infty$-adic AMZVs \cite{H20} appear as periods of certain pre-$t$-motives introduced in \cite{P08}. Let $\mathcal{Z}$ (resp. $\mathcal{AZ}$) be the $k$-vector space spanned by $1$ and all $\infty$-adic MZVs (resp. $\infty$-adic AMZVs). For $w\geq 1$, let $\mathcal{Z}_w$ (resp. $\mathcal{AZ}_w$) be the $k$-vector space spanned by $\infty$-adic MZVs (resp. $\infty$-adic AMZVs) of weight $w$. In \cite{Th10} (resp. \cite{H20}), it was shown that the product of two $\infty$-adic MZVs (resp. $\infty$-adic AMZVs) can be written as an $\mathbb{F}_p$-linear combination of $\infty$-adic MZVs (resp. $\infty$-adic AMZVs) of the same weight, where $\mathbb{F}_p$ is the prime field of $k$. Thus, $\mathcal{Z}$ (resp. $\mathcal{AZ}$) forms a $k$-algebra. Specifically, one has $\mathcal{Z}_{w_1}\mathcal{Z}_{w_2}\subset\mathcal{Z}_{w_1+w_2}$ (resp. $\mathcal{AZ}_{w_1}\mathcal{AZ}_{w_2}\subset\mathcal{AZ}_{w_1+w_2}$) for $w_1\geq 1$ and $w_2\geq 1$. Further, an analogue of Goncharov's direct sum conjecture \cite{G97} for $\infty$-adic MZVs (resp. $\infty$-adic AMZVs) was established in \cite{C14} (resp. \cite{H20}), namely, $\mathcal{Z}$ (resp. $\mathcal{AZ}$) forms a graded $k$-algebra (graded by weights). In other words, all $k$-linear relations among $\infty$-adic MZVs (resp. $\infty$-adic AMZVs) are generated by these $k$-linear relations among $\infty$-adic MZVs (resp. $\infty$-adic AMZVs) with the same weight. 
    
    To study $k$-linear relations among the $\infty$-adic MZVs with the same weight, Todd \cite{To18} used the power sum and lattice reduction methods to produce $k$-linear relations. Based on his result, an analogue of Zagier's dimension conjecture, which predicts the dimension of the $\mathbb{Q}$-vector space generated by the same weight real-valued MZVs over $\mathbb{Q}$ (see \cite{W12}), has been formulated in the positive characteristic setting.
Although some $k$-linear relations among the $\infty$-adic MZVs with the same weight have been discovered (see \cite{LRT14}, \cite{C16} \cite{Ch17}, \cite{To18}, \cite{CPY19} and \cite{GP20}),  not much is known about the $k$-linear independence of $\infty$-adic MZVs with the same-weight \footnote{In Remark \ref{Rem:Alg_Ind}, we mention the algebraic independence results of some $\infty$-adic MZVs obtained by \cite{CY07} and \cite{Mi15a, Mi17}.}. 
    Let $r,~w\in\mathbb{Z}_{>0}$ and 
    $$\mathcal{Z}_w^{r}:=\mathrm{Span}_k\{\zeta_A(\mathfrak{s})\mid\wt(\mathfrak{s})=w,~\dep(\mathfrak{s})=r\}$$
    be the $k$-vector space spanned by $\infty$-adic MZVs of weight $w$ and depth $r$. Also, we set $\mathcal{Z}_w^{1,r}:=\mathcal{Z}_w^{1}+\mathcal{Z}_w^{r}$. 
    In \cite{C16}, Chang proved a necessary condition for the linear dependence of depth 2 $\infty$-adic MZVs, which can be stated as follows.
    \begin{thm}[\cite{C16}]\label{Thm:Chang_16}
        Let $w\in\mathbb{Z}_{>0}$ with $w\geq 2$. For each $i=1,\dots,m$, let $\mathfrak{s}_i:=(s_{i1},s_{i2})\in\mathbb{Z}_{>0}^2$ be chosen with $s_{i1}+s_{i2}=w$. Then, all $k$-linear relations among $\{\zeta_A(w),\zeta_A(\mathfrak{s}_1),\dots, \zeta_A(\mathfrak{s}_m)\}$ are those coming from the $k$-linear relations among $\{\zeta_A(w)\}\cup\{\zeta_A(\mathfrak{s}_j)\ |\ \ s_{j2}\ \text{is divisible by}\ q-1 \}$. Specifically, we have
        $$\dim_k\mathcal{Z}_w^{1,2}\geq w-\lfloor\frac{w-1}{q-1}\rfloor$$
        where $\lfloor -\rfloor$ is the floor function. In particular,
        $$\dim_k\mathcal{Z}_w^2\geq w-1-\lfloor\frac{w-1}{q-1}\rfloor.$$
    \end{thm}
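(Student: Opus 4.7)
My plan is to deploy the $t$-motivic transcendence framework of Anderson, Brownawell, Papanikolas, and Thakur, following the strategy initiated by Chang. First, I would package the relevant MZVs into a single pre-$t$-motive. Concretely, for each index $\mathfrak{s}_i=(s_{i1},s_{i2})$ and for the single index $(w)$, Anderson--Thakur construct a pre-$t$-motive $M_i$ whose period matrix has among its entries the Carlitz period $\tilde{\pi}$ and the ``normalized'' MZV $\tilde{\pi}^w\,\zeta_A(\mathfrak{s}_i)$ (respectively $\tilde{\pi}^w\,\zeta_A(w)$). Taking a suitable fiber product/direct sum $M$ over the Carlitz motive $C$, I obtain a single pre-$t$-motive whose period matrix simultaneously realizes all the $\zeta_A(\mathfrak{s}_i)$ and $\zeta_A(w)$ as periods.

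Next, by Papanikolas' theorem, $\dim\Gamma_M$ equals the transcendence degree over $\overline{k}$ of the field generated by the periods of $M$. The motive $M$ is an iterated extension of $\mathbf{1}$ by tensor powers of $C$, so the Tannakian Galois group $\Gamma_M$ sits inside a semidirect product $\mathbb{G}_m\ltimes U$, where $U$ is a unipotent algebraic subgroup whose coordinates parametrize the depth-one and depth-two contributions. The dimension of $U$ translates directly into a lower bound for $\dim_k\mathcal{Z}_w^{1,2}$, so the theorem reduces to computing $\dim U$, or equivalently to detecting which depth-two sub-extensions of $M$ are \emph{split} over $C^{\otimes w}$.

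The main obstacle, and the crux of the argument, is to prove the following splitting criterion: the sub-pre-$t$-motive of $M$ carrying $\zeta_A(s_1,s_2)$ splits (so the coordinate it contributes to $U$ collapses) if and only if $(q-1)\mid s_2$. The ``if'' direction follows from known Eulerian-type identities of Thakur expressing such $\zeta_A(s_1,s_2)$ in terms of $\tilde{\pi}^{w}$ and lower-depth data. The ``only if'' direction is the hard part: one applies the ABP linear independence criterion to the relevant system of Frobenius difference equations and, via an analysis of the vanishing orders of Anderson--Thakur series at $t=\theta^{q^n}$, rules out any additional algebraic coincidences among the entries. This step must be executed uniformly over the family, so that cancellations between different $\mathfrak{s}_i$ are also excluded.

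Granting the splitting criterion, the dimension calculation becomes bookkeeping: $\dim U$ is at least $1$ (from $\zeta_A(w)$) plus the number of indices $(s_1,s_2)$ with $s_1+s_2=w$, $s_1,s_2\ge1$, and $s_2\not\equiv 0\pmod{q-1}$. The count of $s_2\in\{1,\dots,w-1\}$ divisible by $q-1$ is exactly $\lfloor (w-1)/(q-1)\rfloor$, which yields the stated lower bound $\dim_k\mathcal{Z}_w^{1,2}\ge w-\lfloor (w-1)/(q-1)\rfloor$, and removing the single contribution from $\zeta_A(w)$ gives the bound for $\mathcal{Z}_w^{2}$.
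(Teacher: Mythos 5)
Your overall framework---realizing the weight-$w$ zeta values as periods of extensions of $\mathbf{1}$ by $C^{\otimes w}$, assembling these by a fiber product over the Carlitz motive, and feeding the resulting Frobenius difference system into the ABP criterion---is the right one, and it is essentially how the present paper obtains Theorem~\ref{Thm:Chang_16}: as the special case $S=S_{\leq 2}^{w}$ of Theorem~\ref{Thm:Main_Thm}, with $\mathfrak{Q}_i=(H_{s_{i1}-1},H_{s_{i2}-1})$ and Proposition~\ref{Prop:Specialization}(2). Your final count of the indices $s_2\in\{1,\dots,w-1\}$ divisible by $q-1$ is also correct. Nevertheless, there are two genuine gaps in how you locate the role of the hypothesis $(q-1)\nmid s_2$.

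First, your ``splitting criterion'' is stated as an equivalence, and the ``if'' direction is false as a claim about the literature: it is not known, and not asserted by Chang's theorem, that every $\zeta_A(s_1,s_2)$ with $(q-1)\mid s_2$ satisfies a $k$-linear relation with $\zeta_A(w)$; deciding which double zeta values are Eulerian is a separate and delicate problem (cf.\ \cite{CPY19}). This direction is not needed for the stated lower bounds, but asserting it signals a misreading of what must be proved. Second, and more seriously, you attribute the ``only if'' direction to the vanishing-order analysis at $t=\theta^{q^n}$. In depth two that analysis only uses that the indices are distinct: here $g(\mathfrak{s}_i)=\{s_{i2}\}$, and distinct weight-$w$ depth-two indices automatically have distinct $s_{i2}$, so the vanishing orders are insensitive to whether $q-1$ divides $s_{i2}$. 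The divisibility condition enters at a later and different step: after the ABP criterion forces the class $[M_\star]$ to be $\mathbb{F}_q[t]$-torsion in $\Ext^1_{\mathscr{F}}(\mathbf{1},M'_\star)$, one projects onto $\mathbf{C}^{\otimes s_{i2}}(\overline{k})$ and must know that the point corresponding to $H_{s_{i2}-1}$ is \emph{not} an $\mathbb{F}_q[t]$-torsion point; this is exactly where \cite[Prop.~1.11.2]{AT90} (triviality of the $k$-rational torsion of $\mathbf{C}^{\otimes n}$ unless $(q-1)\mid n$) is used, i.e.\ condition (\ref{Eq:Condition_of_Q_2}) of the paper. Without this torsion argument your outline contains no mechanism that actually distinguishes the two classes of indices. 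Relatedly, the step ``$\dim U$ translates directly into a lower bound for $\dim_k\mathcal{Z}_w^{1,2}$'' is not automatic from Papanikolas' theorem, which computes a transcendence degree: one needs that $U$ is a vector group on which $\mathbb{G}_m$ acts through the single character $x\mapsto x^{w}$, hence is an $\overline{\mathbb{F}_q(t)}$-subspace whose dimension controls the linear span; this should be stated, or else the Tannakian detour should be dropped in favor of the direct $\Ext^1$/$t$-module argument the paper uses.
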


    One of the goals of the present paper is to study the generalization of Theorem~\ref{Thm:Chang_16} in the higher depth case.

\subsection{Statement of the main theorem}
    Set $L_0:=1$ and $L_i:=(\theta-\theta^q)\cdots(\theta-\theta^{q^i})$ for each $i\geq 1$. For an $r$-tuple $\mathfrak{s}=(s_{1},\dots,s_{r})\in \mathbb{Z}_{>0}^{r}$, the Carlitz multiple polylogarithms (CMPLs) are defined as follows (see~\cite{C14}):
    \begin{equation}\label{Eq:CMPL}
        \Li_\mathfrak{s}(z_1,\dots,z_r):=\underset{i_1>\cdots>i_r\geq 0}{\sum}\frac{z_1^{q^{i_1}}\cdots z_r^{q^{i_r}}}{L_{i_1}^{s_1}\cdots L_{i_r}^{s_r}}\in k\llbracket z_1,\ldots,z_r \rrbracket.
    \end{equation}
    
    CMPLs can be viewed as an analogue of classical multiple polylogarithms 
    $$\mathscr{L}_\mathfrak{s}(z_1,\dots,z_r):=\sum_{n_1>\cdots>n_r>0}\frac{z_1^{n_1}\cdots z_r^{n_r}}{n_1^{s_1}\cdots n_r^{s_r}}\in\mathbb{Q}\llbracket z_1,\dots,z_r\rrbracket$$
    in the positive characteristic setting.
The specializations at $(1,\dots,1)$ of classical multiple polylogarithms with several variables give the real-valued MZVs. This phenomenon becomes delicate in the positive characteristic setting.

In \cite{AT90}, it is shown that depth 1 MZVs can be written as $k$-linear combinations of same-weight Carlitz polylogarithms (CMPLs with $r=1$) at algebraic points. This is generalized to higher depth MZVs case in \cite{C14}.
Additionally, some interesting algebraic relations between $\infty$-adic MZVs and Carlitz logarithms at algebraic points were discovered by Thakur in \cite[Thm.~6]{Th09b}. Using the stuffle relations of CMPLs (see \cite[Sec.~5.2]{C14}), we deduce some $k$-linear relations between $\infty$-adic MZVs and CMPLs of the same weight at algebraic points. With these relations in mind, one may naturally ask whether we can extend the linear independence results, such as Theorem~\ref{Thm:Chang_16}, to special values in the higher depth case, including $\infty$-adic MZVs, $\infty$-adic AMZVs, and CMPLs at algebraic points.

    In what follows, we first introduce some terminology used to formulate our main result. For a fixed $w\in\mathbb{Z}_{>0}$, let
    \begin{equation}\label{Eq:Index_1}
        I(w):=\{\mathfrak{s}=(s_1,\dots,s_r)\in\mathbb{Z}_{>0}^r\mid\wt(\mathfrak{s})=w,~1\leq r\leq w\}
    \end{equation}
    be the collection of all weight $w$ indices and let
    \begin{equation}\label{Eq:Index_2}
        J(w):=\{T\subset\{1,\dots,w-1\}\mid T\neq\emptyset\}\cup\{\{0\}\}.
    \end{equation}
    
    Let $\mathfrak{s}_i=(s_{i1},\dots,s_{ir})\in\mathbb{Z}_{>0}^r$ and $\mathfrak{Q}_i:=(Q_{i1},\dots,Q_{ir})\in \overline{k}[t]^r$ for $i\in\mathbb{Z}_{\geq 0}$. There is a family of power series $\mathscr{L}_{[i],j}$ in the variable $t$ 
        with coefficients in $\overline{k}$ and depending on datum $(\mathfrak{s}_i,\mathfrak{Q}_i)$ that defines entire functions on $\mathbb{C}_\infty$ such that the specialization of these series at $t=\theta$ recovers $\infty$-adic MZVs, $\infty$-adic AMZVs, and CMPLs at algebraic points. More precisely, for $n\in\mathbb{Z}$ we define the $n$-fold Frobenius twisting
    \begin{align*}
	    \mathbb{C}_{\infty}((t))&\rightarrow\mathbb{C}_{\infty}((t))\\
	    f:=\sum_{i}a_it^i&\mapsto \sum_{i}a_i^{q^{n}}t^i=:f^{(n)}
    \end{align*}
    and consider the entire power series
    \begin{equation}\label{Eq:Anderson_Thakur_Series}
        \Omega(t):=(-\theta)^{\frac{-q}{q-1}}\prod_{i=1}^\infty\left(1-\frac{t}{\theta^{q^i}}\right)\in\overline{k}\llbracket t\rrbracket
    \end{equation}
    where $(-\theta)^{\frac{1}{q-1}}$ is a fixed $(q-1)$-th root of $-\theta$ such that $1/\Omega(\theta)=\Tilde{\pi}$ (See ~\cite[Cor.~5.1.4]{ABP04}) and $\Tilde{\pi}$ is the Carlitz period.
    Then 
    \begin{equation}\label{Eq:Deformation_Series}
        \mathscr{L}_{[i],j}:=\sum_{\ell_1>\cdots>\ell_{j}\geq 0}
        (\Omega^{s_{ij}}Q_{ij})^{(\ell_j)}\cdots(\Omega^{s_{i1}}Q_{i1})^{(\ell_1)}\in\overline{k}\llbracket t\rrbracket.
    \end{equation}
    
    For example, the \emph{Carlitz logarithm} is the CMPL for $\mathfrak{s}=(1)$, which is given by
    \begin{equation}\label{Eq:Carlitz_Log}
        \log_C(z)=\Li_{(1)}(z)=\sum_{i\geq 0}\frac{z^{q^i}}{L_i}\in k\llbracket z\rrbracket.
    \end{equation}
    Carlitz first noticed that $\log_C(1)=\zeta_A(1)$ in \cite{Ca35}.
    
    Let $u\in\overline{k}$ be such that $\log_C(z)$ converges at $z=u$. Then, we consider the power series (see \cite[Sec.~6.1.1]{P08}):
    $$\mathcal{L}_u(t):=u+\sum_{i>0}\frac{u^{q^i}}{(t-\theta^q)\cdots(t-\theta^{q^i})}\in\overline{k}\llbracket t\rrbracket.$$
    It can be shown that $\mathcal{L}_u(t)$ converges at $t=\theta$ and that the evaluation recovers the Carlitz logarithm at $z=u$, namely,
    $$\mathcal{L}_u(t)\mid_{t=\theta}=\log_C(z)\mid_{z=u}.$$
    One may see from the definition that the product $\Omega(t)\mathcal{L}_u(t)$ provides an example of the special series $\mathscr{L}_{[i],j}$ (see Proposition \ref{Prop:Specialization} for further details).
     
    We characterize our linear independence criterion from the information of the $r$-tuple $\mathfrak{s}\in\mathbb{Z}_{> 0}^r$ of these special values. Therefore, the $r$-tuple $\mathfrak{s}$ reflects the vanishing order of the corresponding family of the power series at $t=\theta^{q^i}$ with $i\in\mathbb{Z}_{>0}$. This fact allows us to introduce a certain notion for the $r$-tuple $\mathfrak{s}$. Specifically, we define the following map:
    \begin{align*}\label{Eq:g_Independent}
        g:I(w)&\to J(w)\\
        (s_1,\dots,s_r)&\mapsto\{w-s_1,w-s_1-s_2,\dots,w-s_1-\cdots-s_{r-1}\},~\mbox{for}~r\geq 2\\
        (w)&\mapsto\{0\}.
    \end{align*}
    
    Then, for each collection of weight $w$ indices $S\subset I(w)$, we say $S$ is $g$-independent if $g(\mathfrak{s})\cap g(\mathfrak{s}')=\emptyset$ for any $\mathfrak{s},~\mathfrak{s}'\in S$ with $\mathfrak{s}\neq\mathfrak{s}'$. In fact, we will see in Lemma~\ref{Eq:Vanishing_Order_of_psi''} that $g(\mathfrak{s})$ is a collection of the vanishing orders of certain entire series at $t=\theta^{q^i}$ with $i\in\mathbb{Z}_{>0}$. Note that the difference in the vanishing orders implies the $\overline{k}(t)$-linear independence of these entire series (see Lemma~\ref{Lem:Vanishing_of_Entire_Functions}). Then, an application of \cite[Thm.~3.1.1]{ABP04} provides the desired $\overline{k}$-linear independence of these special values. Now, we state our main theorem as follows; it will later be restated as Theorem~\ref{Thm:Main_Thm}.
    
    \begin{thm}\label{Thm:Main_Thm_Intro}
        For $w\in\mathbb{Z}_{>0}$, let $S=\{\mathfrak{s}_0,\dots,  \mathfrak{s}_m\}\subset I(w)$ be $g$-independent with $\mathfrak{s}_0=(w)$. Let $\mathfrak{Q}_i\in\overline{k}[t]^{\dep(\mathfrak{s}_i)}$, which satisfies conditions (\ref{Eq:Condition_of_Q_1}) and (\ref{Eq:Condition_of_Q_2})
               and then if we set $\mathscr{L}_{[i]}:=\mathscr{L}_{[i],\dep(\mathfrak{s}_i)}$ for $i=0, \dots, m$, the following set 
        \[
            \{ \mathscr{L}_{[0]}(\theta), \ldots,  \mathscr{L}_{[m]}(\theta)\}
        \]
        is $k$-linearly independent.
            \end{thm}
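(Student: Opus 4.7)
The plan is to apply the Anderson--Brownawell--Papanikolas linear independence criterion \cite[Thm.~3.1.1]{ABP04} in order to translate the desired $k$-linear independence of the values $\mathscr{L}_{[i]}(\theta)$ into a statement about $\overline{k}(t)$-linear independence of the entire series $\mathscr{L}_{[i]}(t)$; the $g$-independence hypothesis will then be used to settle the latter via a vanishing-order analysis at the Frobenius orbit $\{\theta^{q^j}\}_{j\geq 1}$.

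First I would assemble a rigid analytic trivialization $\Psi$ whose entries are $\Omega^w$ together with appropriately $\Omega^{s_{i1}}$-twisted variants of the $\mathscr{L}_{[i]}$ for $i=0,\dots,m$, accompanied by a block-lower-triangular Frobenius matrix $\Phi\in\Mat(\overline{k}[t])$ such that $\Psi^{(-1)}=\Phi\Psi$. The conditions (\ref{Eq:Condition_of_Q_1}) and (\ref{Eq:Condition_of_Q_2}) on $\mathfrak{Q}_i$ are exactly what is needed to guarantee entireness of these series and regularity at $t=\theta$; the construction itself is standard in the style of \cite{C14}, \cite{CPY19} and \cite{H20}. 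Feeding any hypothetical $k$-linear relation $\sum_i c_i\mathscr{L}_{[i]}(\theta)=0$ into \cite[Thm.~3.1.1]{ABP04} then produces a nonzero $\overline{k}[t]$-linear dependence among the entries of $\Psi$, which, after factoring out the common $\Omega^w$, becomes a $\overline{k}(t)$-linear relation of the form $f_{-1}(t)+\sum_{i=0}^m f_i(t)\,\mathscr{L}_{[i]}(t)=0$.

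The crux is to rule out such a relation. Using Lemma~\ref{Eq:Vanishing_Order_of_psi''}, I would read off the vanishing-order profile of each $\mathscr{L}_{[i]}(t)$ at the points $t=\theta^{q^j}$ for $j\geq 1$, encoded by the set $g(\mathfrak{s}_i)$; the $g$-independence hypothesis guarantees that these profiles are pairwise disjoint across $i$, and the choice $\mathfrak{s}_0=(w)$ with $g(\mathfrak{s}_0)=\{0\}$ is exactly what separates the depth-one anchor from the higher-depth series. An inductive elimination---applying Frobenius twists to the putative relation and tracking the resulting orders of vanishing via Lemma~\ref{Lem:Vanishing_of_Entire_Functions}---then forces every $f_i$ and $f_{-1}$ to vanish, contradicting the nontriviality supplied by ABP.

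The hard part will be the combinatorial bookkeeping in this last step: one must carefully match each exponent tuple $\mathfrak{s}_i$ to the precise locations of its vanishing jumps along the Frobenius orbit, and verify that the disjointness furnished by $g$-independence really does separate the $\mathscr{L}_{[i]}$ cleanly enough for the order-of-vanishing lemma to peel them off one at a time. The $t$-motivic construction of $(\Phi,\Psi)$ is essentially routine, so the genuinely new content is this translation of the purely combinatorial condition ``$g$-independent'' into the analytic separation of vanishing loci needed for \cite[Thm.~3.1.1]{ABP04} to deliver the conclusion.
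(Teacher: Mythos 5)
Your first stage (assemble $(\Phi_\star,\psi_\star)$, feed the hypothetical relation into \cite[Thm.~3.1.1]{ABP04}) matches the paper, but the plan breaks down at the elimination step, for two reasons. First, the functions you propose to separate by vanishing orders are the wrong ones: by \eqref{Eq:Specialization_of_L} one has $\mathscr{L}_{[i]}(\theta^{q^N})=(\mathscr{L}_{[i]}(\theta))^{q^N}$, so every $\mathscr{L}_{[i]}$ has vanishing order \emph{zero} along the Frobenius orbit whenever it is nonzero at $t=\theta$, and Lemma~\ref{Lem:Vanishing_of_Entire_Functions} cannot distinguish them. The orders recorded by $g(\mathfrak{s}_i)$ in Lemma~\ref{Eq:Vanishing_Order_of_psi''} belong to the \emph{intermediate} entries $\Omega^{s_{i(j+1)}+\cdots+s_{ir_i}}\mathscr{L}_{[i],j}$ of $\psi''_{[i]}$, not to the last entries $\mathscr{L}_{[i]}=\mathscr{L}_{[i],r_i}$. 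To get a relation to which the vanishing-order lemma applies, one must first pass from $\mathbf{f}'\psi_\star=0$ to $(\mathbf{f}'-\mathbf{f}'^{(-1)}\Phi_\star)\psi_\star=0$, whose last coordinate is automatically zero; the resulting relation involves only $\Omega^w$ and the $\psi''_{[i]}$, and \emph{there} $g$-independence applies. A relation of the shape $f_{-1}+\sum_i f_i\mathscr{L}_{[i]}=0$, as you write it, is not accessible to this method.

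Second, and more seriously, even after the vanishing-order step the conclusion is not a contradiction: one only learns that all $R_i$ vanish, i.e.\ that $\mathbf{f}'$ conjugates $\Phi_\star$ into block-diagonal form, which says precisely that $[M_\star]$ is an $\mathbb{F}_q[t]$-torsion class in $\Ext^1_{\mathscr{F}}(\mathbf{1},M'_\star)$ (this is Lemma~\ref{Lem:Torsion_of_Ext}). The actual contradiction requires a second stage that your proposal omits entirely: projecting $M'_\star/(\sigma-1)M'_\star$ onto $\oplus_i C^{\otimes s_{ir_i}}/(\sigma-1)C^{\otimes s_{ir_i}}\cong\oplus_i\mathbf{C}^{\otimes s_{ir_i}}(\overline{k})$ and observing that triviality of $b[M_\star]$ would force each point $\mathbf{v}_{[i]}$ corresponding to $Q_{ir_i}^{(-1)}(t-\theta)^{s_{ir_i}}$ to be $\mathbb{F}_q[t]$-torsion, contradicting condition \eqref{Eq:Condition_of_Q_2}. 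You have misread the role of \eqref{Eq:Condition_of_Q_2}: it is not about entireness or regularity at $t=\theta$ (that is \eqref{Eq:Condition_of_Q_1}); it is exactly the non-torsion hypothesis that closes the argument, and without it the theorem is false (e.g.\ this is where the ``$q-1\nmid s_{r}$'' phenomena of Theorem~\ref{Thm:Chang_16} enter).
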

    
    For suitable choices of $\mathfrak{Q}_i$, the specialization of $\mathscr{L}_{[i]}$ at $t=\theta$ recovers MZVs, AMZVs, and CMPLs at algebraic points (see Proposition~\ref{Prop:Specialization}). Moreover, it is easily seen from the definition that $S_{\leq 2}^w:=\{w\}\cup\{\mathfrak{s}:=(s_1,s_2)\mid s_1+s_2=w,~s_2\text{ is not divisible by }q-1\}$ is $g$-independent. In the case of $S=S_{\leq 2}^w$ with suitable choices of $\mathfrak{Q}_i$, Theorem~\ref{Thm:Main_Thm_Intro} shows that 
    $$\{\zeta_A(w)\}\cup\{\zeta_A(\mathfrak{s}_j)\ |\ \ \mathfrak{s}_j=(s_{j1},s_{j2}),~s_{j1}+s_{j2}=w,~s_{j2}\ \text{is not divisible by}\ q-1 \}$$ 
    is a $k$-linearly independent set. In particular, Theorem~\ref{Thm:Main_Thm_Intro} recovers Chang's result of Theorem~\ref{Thm:Chang_16}.
    
    Additionally, Theorem~\ref{Thm:Main_Thm_Intro} provides many $k$-linearly independent $\infty$-adic MZVs of the same weight in the higher-depth case. As a consequence, we deduce the following corollary, which provides a lower bound on the dimension of $\mathcal{Z}_w^r$ with the given weight $w$ and depth $r\geq 2$. This will later be restated as Corollary~\ref{Cor:Lower_Bound}.
    
    \begin{cor}\label{Cor:Lower_Bound_Intro}
        For given $w,r\in\mathbb{Z}_{>0}$, we have
        $$\dim_k\mathcal{Z}_{w}^{1,r}\geq 1+ \lfloor\frac{w-1-\lfloor\frac{w-1}{q-1}\rfloor}{r-1}\rfloor$$
        where $\lfloor -\rfloor$ is the floor function. In particular,
        $$\dim_k\mathcal{Z}_{w}^r\geq \lfloor\frac{w-1-\lfloor\frac{w-1}{q-1}\rfloor}{r-1}\rfloor.$$
    \end{cor}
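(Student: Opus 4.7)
The plan is to deduce Corollary~\ref{Cor:Lower_Bound_Intro} as a combinatorial consequence of Theorem~\ref{Thm:Main_Thm_Intro}. Concretely, I would construct an explicit large $g$-independent family $S \subset I(w)$ of weight-$w$ indices, containing $(w)$ and otherwise consisting only of depth-$r$ indices whose associated Anderson--Thakur-type vectors $\mathfrak{Q}_i$ realize the specialization to $\infty$-adic MZVs in the sense of Proposition~\ref{Prop:Specialization}. The starting observation is the natural bijection between weight-$w$ depth-$r$ indices $\mathfrak{s}=(s_1,\dots,s_r)$ and size-$(r-1)$ subsets $\{n_1>\cdots>n_{r-1}\}$ of $\{1,\dots,w-1\}$, given by $n_i := s_{i+1}+s_{i+2}+\cdots+s_r$, under which $g(\mathfrak{s})$ is precisely the associated subset. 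Thus $g$-independence of a family of depth-$r$ indices is equivalent to pairwise disjointness of the corresponding subsets of $\{1,\dots,w-1\}$.

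To additionally fulfill conditions (\ref{Eq:Condition_of_Q_1}) and (\ref{Eq:Condition_of_Q_2}) in the MZV case, I would restrict the subsets to lie inside
\[
    N := \{\, n \in \{1,\dots,w-1\} \mid (q-1) \nmid n \,\},
\]
whose cardinality $|N| = w-1-\lfloor (w-1)/(q-1)\rfloor$ follows from a direct count of multiples of $q-1$ in $\{1,\dots,w-1\}$. The restriction $g(\mathfrak{s}) \subset N$ is the depth-$r$ generalization of Chang's non-divisibility hypothesis from Theorem~\ref{Thm:Chang_16}: it forbids every right-partial sum $s_j+s_{j+1}+\cdots+s_r$ ($2 \leq j \leq r$) of $\mathfrak{s}$ from being divisible by $q-1$, which (by Proposition~\ref{Prop:Specialization}) is exactly the arithmetic condition under which the Anderson--Thakur polynomials produce $\mathfrak{Q}_i$ meeting the hypotheses of Theorem~\ref{Thm:Main_Thm_Intro}.

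The final step is a greedy packing: split $|N|$ into $M := \lfloor |N|/(r-1)\rfloor$ pairwise disjoint blocks of size $r-1$, each producing a depth-$r$ index via the bijection above. Adjoining $\mathfrak{s}_0 := (w)$, whose image $g(\mathfrak{s}_0) = \{0\}$ is automatically disjoint from every $g(\mathfrak{s}_i) \subset \{1,\dots,w-1\}$, yields a $g$-independent set $S = \{\mathfrak{s}_0,\mathfrak{s}_1,\dots,\mathfrak{s}_M\} \subset I(w)$ of size $1+M$. Applying Theorem~\ref{Thm:Main_Thm_Intro} gives the $k$-linear independence of $\{\zeta_A(w),\zeta_A(\mathfrak{s}_1),\dots,\zeta_A(\mathfrak{s}_M)\}$, which yields the first inequality of the corollary, and dropping $\zeta_A(w)$ from this list immediately gives $\dim_k \mathcal{Z}_w^r \geq M$.

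The main obstacle I anticipate is verifying the compatibility claim: namely, that for every depth-$r$ index $\mathfrak{s}$ with $g(\mathfrak{s}) \subset N$, the Anderson--Thakur-type choice of $\mathfrak{Q}$ indeed satisfies both conditions (\ref{Eq:Condition_of_Q_1}) and (\ref{Eq:Condition_of_Q_2}) while producing $\mathscr{L}_{[i]}(\theta)$ proportional to $\zeta_A(\mathfrak{s}_i)$ via the expected Carlitz-period factor. Once this compatibility is secured through Proposition~\ref{Prop:Specialization}, the combinatorial packing argument above requires no further analytic input and Corollary~\ref{Cor:Lower_Bound_Intro} follows from Theorem~\ref{Thm:Main_Thm_Intro}.
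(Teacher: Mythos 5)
Your proposal is correct and follows essentially the same route as the paper: pack the set of integers in $\{1,\dots,w-1\}$ not divisible by $q-1$ into $\lfloor(w-1-\lfloor(w-1)/(q-1)\rfloor)/(r-1)\rfloor$ disjoint $(r-1)$-element blocks, pull them back through $g^{-1}$ to depth-$r$ indices, adjoin $(w)$, and apply the main theorem with the Anderson--Thakur choice of $\mathfrak{Q}_i$ from Proposition~\ref{Prop:Specialization}. The only minor imprecision is that condition (\ref{Eq:Condition_of_Q_2}) really only requires the \emph{last} entry $s_{ir_i}$ (the minimal element of $g(\mathfrak{s}_i)$) to avoid divisibility by $q-1$ (via \cite[Prop.~1.11.2]{AT90}, as in Proposition~\ref{Prop:Non-Torsion}), not every right-partial sum, but since your blocks lie entirely in $N$ this is automatic and the argument goes through unchanged.
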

    
 In the case of $q\neq 2$, given an arbitrary $n,~r\in\mathbb{Z}_{>0}$, we can find $w:=w_{q, n, r}\in\mathbb{Z}_{>0}$, which depends on $q,~n$ and $r$, such that there are at least $n$ distinct $k$-linearly independent $\infty$-adic MZVs with the same depth $r$ and the same weight $w$. 

One motivation of our main theorem arises from the study of the $\mathbb{Q}$-vector space generated by real-valued MZVs. 
Zagier \cite{Z94} gave a conjecture on the dimension of the $\mathbb{Q}$-vector space spanned by depth $2$ real-valued MZVs of fixed weight $n\geq 3$ in terms of the dimension of cusp forms of weight $n$ for $\SL_2(\mathbb{Z})$.
This conjecture is partially solved in \cite{Z93} by giving the upper bound of the dimension, but the lower bound remains unknown. For the higher-depth case, Broadhurst and Kreimer proposed a conjecture in \cite{BK97} that predicts that the dimensions of the weight- and depth-graded parts of the $\mathbb{Q}$-vector space generated by real-valued MZVs are given in the specific generating series. In their conjecture, the generating series is explicitly described using the dimensions of the weight-graded parts of the $\mathbb{Q}$-vector space generated by cusp forms.    
The Broadhurst-Kreimer conjecture in the general depth case remains an open problem. However, Goncharov \cite{G98} proved that in the depth 3 case, the conjecture provides the upper bound of the dimension. Another proof was presented by Ihara and Ochiai \cite{IO08}.  
In the depth $\geq 4$ case, we do not even know if their conjecture provides the upper bound of the dimension.  

On the other hand, in the positive characteristic case, we can consider the dimension of the $k$-linear space generated by $\infty$-adic MZVs of fixed weight and depth. The depth 2 case has been studied by \cite{C16}, and  the lower bound of the dimension is given (see Theorem \ref{Thm:Chang_16}). The depth $\geq 2$ case is discussed in this paper, and the lower bound is given by Corollary \ref{Cor:Lower_Bound_Intro} as a consequence of Theorem \ref{Thm:Main_Thm_Intro}. Thus, our result does not provide the upper bound of the dimension, but we can obtain the lower bound of the dimension in the higher-depth case.


 We end this section by outlining this paper. In \S\ref{Notations}, we provide the notation of the basic objects used in our exposition.
In \S\ref{t-motive}, we review the definitions of Anderson dual $t$-motives and introduce a specific one constructed by means of the fiber coproduct, which was developed in \cite{CM20}. 
In \S\ref{t-module}, we present the definitions and results of Anderson $t$-modules and certain $\Ext^1$-modules. In \S\ref{AT-polynomial}, we revisit the definition of Anderson-Thakur polynomials and that the deformation series can be specialized to CMPLs, $\infty$-adic MZVs and $\infty$-adic AMZVs. We prove our main theorem in \S\ref{Main and app} based on the preliminaries in \S\ref{Pre} and conclude with applications that provide linear independence sets of special values and Corollary \ref{Cor:Lower_Bound_Intro}.

\section{Preliminaries}\label{Pre}
\subsection{Notations}\label{Notations}
\label{No}
We now define the following notation.

\begin{itemize}
\setlength{\leftskip}{1.0cm}
\item[$q=$] a power of a prime number $p$.  
\item[$\mathbb{F}_q=$] a finite field with $q$ elements.
\item[$\theta$, $t=$] independent variables.
\item[$A=$] the polynomial ring $\mathbb{F}_q[\theta]$.
\item[$A_{+}=$] the set of monic polynomials in $A$.
\item[$A_{d+}=$] the set of elements in $A_{+}$ of degree $d$. 
\item[$k=$] the rational function field $\mathbb{F}_q(\theta)$.
\item[$k_{\infty}=$] the completion of $k$ at the infinite place $\infty$, $\mathbb{F}_q((\frac{1}{\theta}))$.
\item[$\overline{k_{\infty}}=$] a fixed algebraic closure of $k_{\infty}$.
\item[$\mathbb{C}_{\infty}=$] the completion of $\overline{k_{\infty}}$ at the infinite place $\infty$.
\item[$\overline{k}=$] a fixed algebraic closure of $k$ in $\mathbb{C}_{\infty}$.
\item[$k^{\rm sep}=$] a fixed separable closure of $k$ in $\overline{k}$.
\item[$|\cdot|_{\infty}=$] a fixed absolute value for the completed field $\mathbb{C}_{\infty}$ such that $|\theta|_{\infty}=q$.
\item[$\mathbb{T}=$] the Tate algebra over $\mathbb{C}_{\infty}$, the subring of $\mathbb{C}_{\infty}\llbracket t \rrbracket$ consisting of\\
\quad power series convergent on the closed unit disc $|t|_{\infty}\leq 1$.
\item[$\mathcal{E}=$] $\{\sum_{i=0}^{\infty}a_it^i\in\overline{k}\llbracket t \rrbracket\mid \lim_{i\to\infty}|a_i|_\infty^{1/i}=0,~[k_\infty(a_0,a_1,\dots):k_\infty]<\infty\}.$
\item[$\mathrm{ord}_\alpha(f)=$] the vanishing order of $f\in\mathcal{E}$ at $\alpha\in\mathbb{C}_\infty$.
\item[$D_i=$] $\prod^{i-1}_{j=0}(\theta^{q^i}-\theta^{q^j})\in A_{+}$, where $D_0:=1$.
\item[$\Gamma_{n+1}=$] the Carlitz gamma, $\prod_{i}D_i^{n_i}$ ($n = \sum_{i}n_iq^i\in\mathbb{Z}_{\geq0} \ (0\leq n_i\leq q-1)$).
\end{itemize}

\subsection{Anderson dual $t$-motives and Frobenius modules}\label{t-motive}
In this section, we recall the notion of Frobenius modules and Anderson dual $t$-motives. Here, we use the term Anderson dual $t$-motives for those called dual $t$-motives in \cite[Def.~4.4.1]{ABP04} and Anderson $t$-motives in \cite[Def. 3.4.1]{P08}.

We denote by $\overline{k}[t, \sigma]$ the non-commutative $\overline{k}[t]$-algebra generated by $\sigma$ subject to the following relation:
\[
 \sigma f=f^{(-1)}\sigma, \quad f\in\overline{k}[t].
\]

\begin{defn}[{\cite[Sec.~2.2]{CPY19}}]
A {\it Frobenius module} is a left $\overline{k}[t, \sigma]$-module that is free of finite rank over $\overline{k}[t]$. We define morphisms of Frobenius modules by left $\overline{k}[t, \sigma]$-module homomorphisms and denote by $\mathscr{F}$ the category of Frobenius modules.
\end{defn}

Let $M$ be a Frobenius module with a fixed $\overline{k}[t]$-basis $\mathbf{m}=(\mathbf{m}_1,\dots,\mathbf{m}_r)^\mathrm{tr}$, where $r=\rank_{\overline{k}[t]}M$. Then, the $\sigma$-action can be represented by a matrix $\Phi\in\Mat_r(\overline{k}[t])$. In other words, we have $\sigma\mathbf{m}=\Phi\mathbf{m}$. Conversely, once we fix a free $\overline{k}[t]$-module $M$ of rank $r$ with a fixed $\overline{k}[t]$-basis $\mathbf{m}=(\mathbf{m}_1,\dots,\mathbf{m}_r)^\mathrm{tr}$ and $\Phi\in\Mat_r(\overline{k}[t])$, we can naturally obtain a Frobenius module structure on $M$ by setting $\sigma\mathbf{m}=\Phi\mathbf{m}$. In this case, we call $M$ the Frobenius module defined by $\Phi$.

\begin{defn}
An {\it Anderson dual $t$-motive} is a left  $\overline{k}[t, \sigma]$-module $M$ satisfying that
\begin{itemize}
    \item[(i)] $M$ is a free left $\overline{k}[t]$-module of finite rank,
    \item[(ii)] $M$ is a free left $\overline{k}[\sigma]$-module of finite rank,
    \item[(iii)] $(t-\theta)^s M\subset \sigma M$ for all sufficiently large $s\in\mathbb{Z}$.
\end{itemize}
According to the above definition, it is clear that an Anderson dual $t$-motive is also a Frobenius module.
\end{defn}

We give the following basic but important examples of a Frobenius module and an Anderson $t$-module.

\begin{eg}\label{Ex:Trivial_Frobenius_Module}
    The \emph{trivial Frobenius module} is defined by $\mathbf{1}=\overline{k}[t]$ with the $\sigma$-action  given by $\sigma f=f^{(-1)}$ for each $f\in\overline{k}[t]$. Note that $\mathbf{1}$ is \emph{not} an Anderson dual $t$-motive, as it is not of finite rank as a left $\overline{k}[\sigma]$-module.
\end{eg}

\begin{eg}\label{Ex:Tensor_Powers_of_Carlitz_Motive}
    Let $n\in\mathbb{Z}_{>0}$. Then, the {\it$n$-th tensor power of the Carlitz motive} is defined by $C^{\otimes n}=\overline{k}[t]$ with the $\sigma$-action  given by $\sigma f=f^{(-1)}(t-\theta)^n$ for each $f\in\overline{k}[t]$. Note that $C^{\otimes n}$ is an Anderson dual $t$-motive, the set $\{1\}$ forms a $\overline{k}[t]$-basis of $C^{\otimes n}$ and the set $\{(t-\theta)^{n-1},\dots,(t-\theta),1\}$ forms a $\overline{k}[\sigma]$-basis of $C^{\otimes n}$.
\end{eg}

Next, we construct the Anderson dual $t$-motive $M'_{\star}$ via the fiber coproduct method, which was introduced in \cite{CM20}. To begin, let us introduce some notions. We first recall the power series defined in (\ref{Eq:Anderson_Thakur_Series})
\begin{align*}
    \Omega(t)=(-\theta)^{\frac{-q}{q-1}}\prod_{i=1}^\infty\left(1-\frac{t}{\theta^{q^i}}\right)\in\mathcal{E}.
\end{align*}
Note that $\Omega(t)$ satisfies the Frobenius difference equation 
\begin{align}\label{domega}
\Omega^{(-1)}=(t-\theta)\Omega.
\end{align}

Given a polynomial $Q:=\sum_{i}c_it^i\in \overline{k}[t]$, we define $||Q||_\infty:=\max_i\{|c_i|_\infty\}$. For an $r$-tuple $\mathfrak{s}=(s_1,\dots,s_r)\in\mathbb{Z}_{>0}^r$, we set  $\mathbb{D}_{\mathfrak{s}}$ as the collection of all $\mathfrak{Q}:=(Q_1,\dots,Q_r)\in \overline{k}[t]^r$ satisfying the following two conditions:
\begin{equation}\label{Eq:Condition_of_Q_1}
        \left( ||Q_1||_\infty/|\theta|_\infty^{qs_1/(q-1)} \right)^{q^{i_1}}\cdots
        \left( ||Q_r||_\infty/|\theta|_\infty^{qs_r/(q-1)} \right)^{q^{i_r}}\to 0 \ \ \mbox{as}\ \ 0\leq i_1<\cdots<i_r\to\infty,
\end{equation}
\begin{equation}\label{Eq:Condition_of_Q_2}
    aQ_r\not\in(\sigma-1)C^{\otimes s_r}\mbox{ for all }a\in\mathbb{F}_q[t].
\end{equation}

    Condition (\ref{Eq:Condition_of_Q_1}) guarantees that the series $\mathscr{L}_{[i],j}$ given in (\ref{Eq:Deformation_Series}) defines an entire function, and condition (\ref{Eq:Condition_of_Q_2}) implies that the special point we constructed is not an $\mathbb{F}_q[t]$-torsion element under the isomorphism given in Theorem~\ref{Thm:Anderson_Ext}.
    
    Next, we set Anderson dual $t$-motives $\{M'_{[i]}\}$ and construct the fiber coproduct over $C^{\otimes w}$ of them. Let $w\in\mathbb{Z}_{>0}$,  $S=\{\mathfrak{s}_0,\dots,\mathfrak{s}_m\}\subset I(w)$ with $\mathfrak{s}_0=(w)$. 
    Let $r_i:=\mathrm{dep}(\mathfrak{s}_i)$ for each $0\leq i\leq m$. For $\mathfrak{s}_i=(s_{i1},\dots,s_{ir_i})\in I(w)$ and $\mathfrak{Q}_i=(Q_{i1},\dots,Q_{ir_i})\in\mathbb{D}_{\mathfrak{s}_i}$, we define the square matrix $\Phi_{[i]}\in\Mat_{r_i+1}(\overline{k}[t])$ and the column vector $\psi_{[i]}\in\Mat_{(r_i+1)\times 1}(\mathcal{E})$ as follows: 
\[
    \Phi_{[i]}:=
    \begin{pmatrix}
    (t-\theta)^{s_{i1}+\cdots+s_{ir_i}} & 0 & 0 & \cdots & 0 \\
    Q_{i1}^{(-1)}(t-\theta)^{s_{i1}+\cdots+s_{ir_i}} & (t-\theta)^{s_{i2}+\cdots+s_{ir_i}} & 0 & \cdots & 0\\
    0 & Q_{i2}^{(-1)}(t-\theta)^{s_{i2}+\cdots+s_{ir_i}} & \ddots &  & \vdots \\
    \vdots &  & \ddots & (t-\theta)^{s_{ir_i}} & 0\\
    0 & \cdots & 0 & Q_{ir_i}^{(-1)}(t-\theta)^{s_{ir_i}} & 1
    \end{pmatrix},
\]
\[
    \psi_{[i]}:=
        \left(
        \begin{array}{clr}
            \Omega^{s_{i1}+\cdots+s_{ir_i}}  \\
            \Omega^{s_{i2}+\cdots+s_{ir_i}}\mathscr{L}_{[i],1} \\
            \vdots  \\
            \Omega^{s_{ir_i}}\mathscr{L}_{[i],r_i-1}\\
            \mathscr{L}_{[i],r_i}
        \end{array}
    \right).
\]
Here, we recall the special series defined in (\ref{Eq:Deformation_Series})
\begin{align}
    \mathscr{L}_{[i],j}:=\sum_{\ell_1>\cdots>\ell_{j}\geq 0}
    (\Omega^{s_{ij}}Q_{ij})^{(\ell_j)}\cdots(\Omega^{s_{i1}}Q_{i1})^{(\ell_1)}\in\mathcal{E}.
\end{align}
We remark that this series satisfies the Frobenius difference equation:
\begin{align}\label{dL}
    \mathscr{L}_{[i],j}^{(-1)}=\mathscr{L}_{[i],j}+(\Omega^{s_{ij}}Q_{s_{ij}-1})^{(-1)}\mathscr{L}_{[i],j-1}.
\end{align}
Here, we set $\mathscr{L}_{[i], 0}=1$.
For later convenience, we set $\mathscr{L}_{[i]}:=\mathscr{L}_{[i],r_i}$. Importantly, the deformation series $\mathscr{L}_{[i],j}$ has the property that
\begin{equation}\label{Eq:Specialization_of_L}
    \mathscr{L}_{[i],j}(\theta^{q^N})=(\mathscr{L}_{[i],j}(\theta))^{q^N}
\end{equation}
for all $N\in\mathbb{Z}_{>0}$ (See \cite[Lem.~5.3.5]{C14},~\cite[Prop.~2.3.3]{CPY19}).

To simplify our notation, for $r_i\geq 2$, we express 
\[
\Phi_{[i]}=\left(
\begin{array}{c|c|c}
 (t-\theta)^w & 0 & 0\\ \hline
   D_{[i]} & \Phi_{[i]}'' & 0\\ \hline
    0 & \nu_{[i]} & 1
\end{array}
\right),
\Phi_{[i]}'=
\left(
    \begin{array}{c|c}
    (t-\theta)^w & 0\\ \hline
    D_{[i]} & \Phi_{[i]}''\\
    \end{array}
\right).
\]
where $D_{[i]}\in\Mat_{(r_i-1)\times 1}(\overline{k}[t])$, 
$\nu_{[i]}\in\Mat_{1\times (r_i-1)}(\overline{k}[t])$
and $\Phi_{[i]}''\in\Mat_{r_i-1}(\overline{k}[t])$.
We further set
\[
    \psi_{[i]}=
        \left(
        \begin{array}{clr}
            \Omega^{w}  \\ \hline
            \psi_{[i]}''\\ \hline
            \mathscr{L}_{[i]}
        \end{array}
    \right),~
    \psi_{[i]}'=
        \left(
        \begin{array}{clr}
            \Omega^{w}  \\ \hline
            \psi_{[i]}''
        \end{array}
    \right)
\]
where $\psi_{[i]}''\in\Mat_{(r_i-1)\times 1}(\mathcal{E})$.

For $r_i=1$, we express
\[
    \Phi_{[i]}=
        \begin{pmatrix}
            (t-\theta)^{w} & 0\\
            \nu_{[i]} & 1
        \end{pmatrix},~
    \Phi_{[i]}'=
        \begin{pmatrix}
            (t-\theta)^{w}
        \end{pmatrix}
\]
and
\[
    \psi_{[i]}=
        \left(
        \begin{array}{clr}
            \Omega^{w}  \\
            \mathscr{L}_{[i]}
        \end{array}
        \right),~
    \psi_{[i]}'=
        \begin{pmatrix}
            \Omega^w
        \end{pmatrix}.
\]

For each $H=(h_1,\dots,h_m)\in\Mat_{1\times m}(\mathcal{E})$, we denote by $\mathrm{ord}_\alpha(H):=\{\mathrm{ord}_\alpha(h_j)\}_{j=1}^m$ for $\alpha\in\mathbb{C}_\infty$. The following lemma is crucial in the proof of our main theorem.

\begin{lem}\label{Eq:Vanishing_Order_of_psi''}
    Suppose that $r_i\geq 2$, $\mathscr{L}_{[i],j}(\theta)\neq 0$ for all $1\leq i\leq m$ and $1\leq j\leq r_i-1$. We thus have
    $$\mathrm{ord}_{\theta^{q^N}}(\psi''_{[i]})=\{w-s_{i1},\dots,w-s_{i1}-\cdots-s_{ir_i-1}\}=g(\mathfrak{s}_i)$$
    for all $N\in\mathbb{Z}_{> 0}$.
\end{lem}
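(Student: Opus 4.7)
The plan is to compute the vanishing order of each component of $\psi''_{[i]}$ at $t=\theta^{q^N}$ directly and then check that the resulting set matches $g(\mathfrak{s}_i)$.

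First, I would unpack the definition. For $1\leq j\leq r_i-1$, the $j$-th entry of $\psi''_{[i]}$ is $\Omega^{s_{i,j+1}+\cdots+s_{ir_i}}\,\mathscr{L}_{[i],j}$. Since $\mathfrak{s}_i$ has total weight $w$, the exponent on $\Omega$ is exactly $w-s_{i1}-\cdots-s_{ij}$. Therefore it suffices to prove the two ingredients
\[
\mathrm{ord}_{\theta^{q^N}}(\Omega)=1 \quad\text{and}\quad \mathrm{ord}_{\theta^{q^N}}(\mathscr{L}_{[i],j})=0
\]
for every $N\in\mathbb{Z}_{>0}$ and every $1\leq j\leq r_i-1$, because $\mathrm{ord}_\alpha$ is additive on products of nonzero elements of $\mathcal{E}$.

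The first ingredient is immediate from the explicit product expansion (\ref{Eq:Anderson_Thakur_Series}): the factor $(1-t/\theta^{q^N})$ contributes a simple zero at $t=\theta^{q^N}$, and no other factor vanishes there, so $\Omega$ has a simple zero at $t=\theta^{q^N}$ for every $N\geq 1$. The second ingredient follows from the specialization identity (\ref{Eq:Specialization_of_L}), which gives $\mathscr{L}_{[i],j}(\theta^{q^N})=\bigl(\mathscr{L}_{[i],j}(\theta)\bigr)^{q^N}$; combined with the hypothesis $\mathscr{L}_{[i],j}(\theta)\neq 0$ for $1\leq j\leq r_i-1$, this shows $\mathscr{L}_{[i],j}(\theta^{q^N})\neq 0$, i.e., $\mathrm{ord}_{\theta^{q^N}}(\mathscr{L}_{[i],j})=0$.

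Putting these together, the $j$-th entry of $\psi''_{[i]}$ vanishes at $t=\theta^{q^N}$ to order exactly $w-s_{i1}-\cdots-s_{ij}$. Running $j$ from $1$ to $r_i-1$ produces the set $\{w-s_{i1},\,w-s_{i1}-s_{i2},\,\ldots,\,w-s_{i1}-\cdots-s_{i,r_i-1}\}$, which is precisely $g(\mathfrak{s}_i)$ by definition of the map $g$. There is no serious obstacle here; the only point that needs care is the appeal to (\ref{Eq:Specialization_of_L}) to upgrade non-vanishing at $\theta$ to non-vanishing at every $\theta^{q^N}$, which is exactly the role played by the hypothesis that $\mathscr{L}_{[i],j}(\theta)\neq 0$.
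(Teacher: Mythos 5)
Your proposal is correct and follows essentially the same route as the paper: both arguments use the specialization identity (\ref{Eq:Specialization_of_L}) together with the hypothesis $\mathscr{L}_{[i],j}(\theta)\neq 0$ to conclude that $\mathscr{L}_{[i],j}$ does not vanish at $t=\theta^{q^N}$, so the order of the $j$-th entry of $\psi''_{[i]}$ equals that of $\Omega^{s_{i(j+1)}+\cdots+s_{ir_i}}$, namely $w-s_{i1}-\cdots-s_{ij}$. Your write-up merely makes explicit the simple zero of $\Omega$ at $\theta^{q^N}$ and the additivity of $\mathrm{ord}_\alpha$, which the paper leaves implicit.
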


\begin{proof}
    Recall from the definition that
    \[
    \psi_{[i]}''=
        \left(
        \begin{array}{clr}
            \Omega^{s_{i2}+\cdots+s_{ir_i}}\mathscr{L}_{[i],1} \\
            \vdots  \\
            \Omega^{s_{ir_i}}\mathscr{L}_{[i],r_i-1}
        \end{array}
        \right).
    \]
    Since $\mathscr{L}_{[i],j}(\theta)\neq 0$ for all $1\leq i\leq m$ and $1\leq j\leq r_i-1$, we can use (\ref{Eq:Specialization_of_L}) to deduce that
    $$\mathrm{ord}_{\theta^{q^N}}(\Omega^{s_{i(j+1)}+\cdots+s_{ir_i}}\mathscr{L}_{[i],j})=\mathrm{ord}_{\theta^{q^N}}(\Omega^{s_{i(j+1)}+\cdots+s_{ir_i}})=w-s_{i1}-\cdots-s_{ij}.$$
    Then, the desired result immediately follows.
\end{proof}

Now, we consider the Frobenius difference equation associated with $\mathbb{F}_q[t]$-linear combinations $a_0\mathscr{L}_{[0]}+\cdots +a_m\mathscr{L}_{[m]}$ for some $a_0,\dots,a_m\in\mathbb{F}_q[t]$. The result is given by
\begin{equation}\label{Eq:Fiber_Phi}
    \Phi_\star:=
    \begin{pmatrix}
    (t-\theta)^{w} & 0 & 0 & \cdots & 0 \\
    D_{[1]} & \Phi_{[1]}'' & 0 & \cdots & 0\\
    \vdots &  & \ddots &  & \vdots \\
    D_{[m]} &  &  & \Phi_{[m]}'' & 0\\
    a_0\nu_{[0]} & \cdots & \cdots & a_m\nu_{[m]} & 1
    \end{pmatrix}
\end{equation}
and
\begin{equation}\label{Eq:Fiber_psi}
    \psi_\star:=
    \left(
        \begin{array}{clr}
            \Omega^{w}  \\
            \psi_{[1]}''\\
            \vdots\\
            \psi_{[m]}''\\
            a_0\mathscr{L}_{[0]}+\cdots+a_m\mathscr{L}_{[m]}
        \end{array}
    \right).
\end{equation}
One can check that $\psi_\star^{(-1)}=\Phi_\star\psi_\star$ by direct computation using \eqref{domega} and \eqref{dL}. To simplify the notation, we set
\[
    \Phi_\star=
    \begin{pmatrix}
    \Phi_\star' & 0\\
    \nu_\star & 1
    \end{pmatrix}.
\]
Let $M'_{[i]}$ (resp. $M'_\star$) be the Frobenius module defined by $\Phi'_{[i]}$ (resp. $\Phi'_\star$). Then, one can check directly that $M'_{[i]}$ defines an Anderson dual $t$-motive and $M'_\star$ is the fiber coproduct \cite[Sec.~2.4]{CM20} of $\{M'_{[i]}\}$ over $C^{\otimes w}$. Therefore, $M'_\star$ also defines an Anderson dual $t$-motive by \cite[Prop.~2.4.5]{CM20}.

\subsection{The $\mathrm{Ext}^1$-module and Anderson $t$-modules}\label{t-module}
In this section, we recall the isomorphism between certain Ext$^1$-modules and Anderson $t$-modules due to Anderson's idea. 

First, we review the definition of Anderson $t$-modules. Let $L$ be an $A$-field with $A\subset L\subset \mathbb{C}_\infty$, and let $\tau:=(x\mapsto x^{q}):L\to L$ be the Frobenius $q$-th power operator. Let $L[\tau]$ be the twisted polynomial ring in $\tau$ over $L$ subject to the relation $\tau\alpha=\alpha^q\tau$ for $\alpha\in L$. 
\begin{defn}[\cite{A86}]\label{Definition of t-module}
Let $L$ be an $A$-field with $A\subset L\subset \mathbb{C}_\infty$. For a fixed $d\in\mathbb{Z}_{>0}$, a $d$-dimensional Anderson $t$-module defined over $L$ is a pair $E=(\mathbb{G}_a^d,\rho)$ where $\mathbb{G}_a^d$ is the $d$-dimensional additive group scheme over $L$ and $\rho$ is an $\mathbb{F}_q$-linear ring homomorphism 
\begin{align*}
    \rho:\mathbb{F}_q[t]&\rightarrow \Mat_d(L[\tau])\\
                    a&\mapsto \rho_{a}
\end{align*}
such that when we write $\rho_t=\alpha_0+\sum_i\alpha_i\tau^i$ with $\alpha_i\in\Mat_d(L)$,  $\alpha_0-\theta I_d$ is a nilpotent matrix. A morphism of Anderson $t$-modules defined over $L$ is a morphism of additive group schemes over $L$ commuting with $\mathbb{F}_q[t]$-action. That is, for Anderson $t$-modules $E=(\mathbb{G}_a^d, \rho)$ and $F=(\mathbb{G}_a^m, \mu)$ over $L$, the morphism $f$ satisfies $f\rho_a=\mu_a f$ $(a\in\mathbb{F}_q[t])$.
\end{defn}
Later on, supposing that $L\subset F$ is a field extension, we consider the $F$-valued points
of the Anderson $t$-module $E$ defined over $L$ and denote it by $E(F)$: that is, 
a pair $(\mathbb{G}_a^d(F), \rho)$ of the $F$-valued points of $\mathbb{G}_a^d$ and the $\mathbb{F}_q$-linear ring homomorphism $\rho$ such that $\rho(\mathbb{F}_q[t])\subset\Mat_d(F[\tau])$. 

We fix an Anderson dual $t$-motive $M'$ of rank $d$ over $\overline{k}[t]$, which is defined by the matrix $\Phi'\in\Mat_{d}(\overline{k}[t])$. 
Then, we define $\Ext_{\mathscr{F}}^1({\bf 1}, M')$ to be the left $\mathbb{F}_q[t]$-module of equivalence classes $[ M]$ of Frobenius modules $M$, which fits into the following short exact sequence of Frobenius modules:
\[
    0\rightarrow M'\rightarrow M\rightarrow {\bf 1}\rightarrow 0.
\]

The left $\mathbb{F}_q[t]$-module structure of $\Ext_{\mathscr{F}}^1({\bf 1}, M')$ comes from the Baer sum and pushout of morphisms of $M'$. 
Specifically, the structure is described as follows. We assume that $M_1, M_2 \in \mathscr{F}$ are defined by 
\[
\begin{pmatrix}
    \Phi' & 0\\
    {\bf v}_1 & 1\\
    \end{pmatrix}
,
\begin{pmatrix}
    \Phi' & 0\\
    {\bf v}_2 & 1\\
    \end{pmatrix}
    \in\Mat_{d+1}(\overline{k}[t])
\]    
respectively, and both fit into the above short exact sequence.    
Then, the Baer sum $M_1+_{B}M_2$ of $M_1$ and $M_2$ is a Frobenius module defined by the matrix
\[
\begin{pmatrix}
    \Phi' & 0\\
    {\bf v}_1+{\bf v}_2 & 1\\
    \end{pmatrix}.
\]

Moreover, by taking the scalar product with $a\in \mathbb{F}_q[t]$, we obtain an endomorphism $a:M'\rightarrow M'$. Then, the pushout $a\ast M_1$ of the endomorphism is a Frobenius module defined by
\[
\begin{pmatrix}
    \Phi' & 0\\
    a{\bf v}_1 & 1\\
    \end{pmatrix}.
\]
Then, $\Ext^1_{\mathscr{F}}({\bf 1}, M')$ forms an $\mathbb{F}_q[t]$-module with 
\[
     [M_1]+[M_2]:=[M_1+_B M_2],\quad a[M_1]:=[a\ast M_1] \quad (a\in\mathbb{F}_q[t])
\]
for representatives $[M_1], [M_2]\in\Ext^1_{\mathscr{F}}({\bf 1}, M')$.
Anderson proved the following result involving $\Ext^1_{\mathscr{F}}({\bf 1}, M')$.   
\begin{thm}[Anderson,~{\cite[Thm.~5.2.1]{CPY19}}]\label{Thm:Anderson_Ext}
Let $M'$ be an Anderson dual $t$-motive. Then, we have the following $\mathbb{F}_q[t]$-module isomorphism:
\begin{equation}\label{Eq:F_q[t]-module_Isomorphism}
    \Ext_{\mathscr{F}}^1(\mathbf{1},M')\cong M'/(\sigma-1)M'\cong E'(\overline{k})
\end{equation}
where $E'$ is the Anderson $t$-module associated with $M'$ defined over $\overline{k}$ in the sense that the $\overline{k}$-valued point of $E'$ is isomorphic to $M'/(\sigma-1)M'$ as $\mathbb{F}_q$-vector spaces and the $\mathbb{F}_q[t]$-module structure on $E'$ via $\rho$ is induced by the $\mathbb{F}_q[t]$-action on $M'/(\sigma-1)M'$.  
\end{thm}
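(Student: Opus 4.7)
The plan is to establish the two isomorphisms separately. For the first, $\Ext^1_{\mathscr{F}}(\mathbf{1}, M') \cong M'/(\sigma-1)M'$, I would construct inverse maps explicitly. Given a short exact sequence $0 \to M' \to M \to \mathbf{1} \to 0$ in $\mathscr{F}$, the freeness of $\mathbf{1}$ as a $\overline{k}[t]$-module provides a $\overline{k}[t]$-linear section $s$; setting $e := s(1)$, the matrix presentation of $\sigma$ recalled in the excerpt forces $\sigma e = v + e$ for a unique $v \in M'$. A different section shifts $e$ by some $w \in M'$, which shifts $v$ by $(\sigma-1)w$, so the class $[v] \in M'/(\sigma-1)M'$ is canonically attached to $[M]$. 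Conversely, given $v \in M'$, declaring $\sigma e := v + e$ on $M' \oplus \overline{k}[t]e$ produces a Frobenius module extension, and $v \in (\sigma-1)M'$ is precisely the condition for a $\sigma$-equivariant splitting to exist. The Baer sum and pushout descriptions recorded just before the theorem then show that this bijection is $\mathbb{F}_q[t]$-linear.

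For the second isomorphism, I would use property (ii) to fix a left $\overline{k}[\sigma]$-basis $\{n_1, \ldots, n_d\}$ of $M'$ and define
\[
\phi: \overline{k}^d \longrightarrow M'/(\sigma-1)M', \qquad (a_1, \ldots, a_d) \longmapsto \Big[\sum_i a_i n_i\Big].
\]
The commutation $\sigma f = f^{(-1)}\sigma$ together with the identity $\sigma^j m - m = (\sigma-1)(\sigma^{j-1}+\cdots+1)m$ yields $\alpha \sigma^j n_i \equiv \alpha^{q^j} n_i$ modulo $(\sigma-1)M'$, proving surjectivity of $\phi$. For injectivity, if $\sum a_i n_i = (\sigma-1)m$ with $m = \sum b_{ij}\sigma^j n_i$ a \emph{finite} $\overline{k}[\sigma]$-combination, expanding $\sigma m$ via the same commutation rule and comparing coefficients in the free $\overline{k}[\sigma]$-basis $\{\sigma^j n_i\}_{i,j}$ of $M'$ forces the cascading relation $b_{i,j} = b_{i,0}^{1/q^j}$. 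Finiteness of the expression for $m$ then forces $b_{i,0} = 0$, and so all $a_i = 0$.

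Next, I would transport the $\mathbb{F}_q[t]$-action through $\phi$ to construct $E'$. Writing $t n_i = \sum_{j,k} p_{ijk}\sigma^k n_j$ in the $\overline{k}[\sigma]$-basis and reducing via $\alpha\sigma^k n_j \equiv \alpha^{q^k} n_j$, the resulting action on $\overline{k}^d$ has the shape $\rho_t = \alpha_0 + \alpha_1\tau + \cdots \in \Mat_d(\overline{k}[\tau])$, so $E' := (\mathbb{G}_a^d, \rho)$ is a candidate Anderson $t$-module. To verify the nilpotency condition in Definition~\ref{Definition of t-module}, I would reduce instead modulo $\sigma M'$: property (iii), $(t-\theta)^s M' \subset \sigma M'$, says that the induced action of $t-\theta$ on the $\overline{k}$-vector space $M'/\sigma M'$ (with basis $\{n_i\}$) is nilpotent, and this action coincides with the constant-in-$\sigma$ coefficients $(p_{ij0})$, which differ from $\alpha_0$ only by transposition. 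Hence $\alpha_0 - \theta I_d$ is nilpotent, as required.

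The main obstacle I anticipate is bookkeeping around the non-commutativity: $(\sigma - 1)M'$ is merely an $\mathbb{F}_q$-vector subspace, not a $\overline{k}$-submodule, so the quotient carries no $\overline{k}$-structure, and every time a scalar is moved across a power of $\sigma$ a Frobenius twist must be recorded. Once those twists are tracked consistently, both isomorphisms and their compatibility with the $\mathbb{F}_q[t]$-structures follow along the lines above.
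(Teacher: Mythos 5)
The paper does not prove this theorem at all: it is imported verbatim from Anderson's work as presented in \cite[Thm.~5.2.1 and Sec.~5.2]{CPY19}, so there is no in-paper argument to compare against. Your proposal is the standard construction given in that reference --- section/cocycle correspondence for $\Ext^1_{\mathscr{F}}(\mathbf{1},M')\cong M'/(\sigma-1)M'$, reduction of a $\overline{k}[\sigma]$-basis with the twist $\alpha\sigma^j n_i\equiv \alpha^{q^j}n_i$ for the second isomorphism, and nilpotency of $\alpha_0-\theta I_d$ read off from condition (iii) on $M'/\sigma M'$ --- and it is correct as outlined.
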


For more details about the construction of these isomorphisms, see \cite[Sec.~5.2]{CPY19}. We also refer readers to \cite{PR03}, \cite{HP04}, \cite{Ta10} and \cite{HJ16} for a related discussion.

For the $n$-th tensor power of the Carlitz motive, the isomorphisms \eqref{Eq:F_q[t]-module_Isomorphism} are described in the following example.

\begin{eg}
    Let $n\in\mathbb{Z}_{>0}$. The {\it$n$-th tensor power of the Carlitz module} is the associated Anderson $t$-module of $C^{\otimes n}$, which is given by $\mathbf{C}^{\otimes n}:=(\mathbb{G}_a^n,[\cdot]_n)$, where $[\cdot]_n$ is uniquely determined by
    \[
        [t]_n:=
        \begin{pmatrix}
        \theta & 1 & \cdots & 0 \\
         & \theta & \ddots & \vdots\\
         & & \ddots & 1 \\
        \tau & & & \theta
        \end{pmatrix}\in\Mat_n(\mathbb{C}_\infty[\tau]).
    \]
    Recall that $C^{\otimes n}$ has a $\overline{k}[\sigma]$-basis $\{(t-\theta)^{n-1},\dots,(t-\theta),1\}$; thus, every $f\in C^{\otimes{n}}/(\sigma-1)C^{\otimes n}$ has a unique representative with $t$-degree less than or equal to $n-1$ of the form
    $$f_1(t-\theta)^{n-1}+f_2(t-\theta)^{n-2}+\cdots+f_n,~f_i\in\overline{k}.$$
    Then, the isomorphisms \eqref{Eq:F_q[t]-module_Isomorphism} can be explicitly described as follows:
    \begin{align*}
        \Ext_{\mathscr{F}}^1(\mathbf{1},C^{\otimes n})&\cong C^{\otimes n}/(\sigma-1)C^{\otimes n}\cong \mathbf{C}^{\otimes n}(\overline{k})\\
        [M_f]&\mapsto f+(\sigma-1)C^{\otimes n}\mapsto (f_1,\dots,f_n)^{\mathrm{tr}}
    \end{align*}
    where $M_f$ is the Frobenius module defined by the matrix
    \[
        \Phi_f:=
        \begin{pmatrix}
            (t-\theta)^n & 0 \\
            f^{(-1)}(t-\theta)^n & 1
        \end{pmatrix}.
    \]
\end{eg}

Next, we present two lemmas that enable us to consider linear independence of certain special values via the torsion elements of Anderson $t$-modules and $\Ext^1$-modules. The first lemma concerns the $\overline{k}(t)$-linear independence of elements in $\mathcal{E}$.

\begin{lem}\label{Lem:Vanishing_of_Entire_Functions}
    Let $f_1,\dots,f_m\in\mathcal{E}$ be non-zero elements and $c_1, \dots,c_m\in\overline{k}(t)$ such that $c_1f_1+\cdots+c_mf_m=0$. Suppose that there are infinitely many $\alpha\in\mathbb{C}_\infty$ such that $\mathrm{ord}_\alpha(f_i)$ is different for each $i$. Then, we have $c_i=0$ for all $1\leq i\leq m$.
\end{lem}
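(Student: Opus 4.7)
The plan is to reduce to the polynomial case and then use the distinct vanishing orders to isolate a leading term that cannot cancel.

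First I would clear denominators: multiplying the relation $c_1 f_1 + \cdots + c_m f_m = 0$ by a common denominator in $\overline{k}[t]$, we may assume $c_i \in \overline{k}[t]$ for all $i$ without loss of generality. Suppose for contradiction that the set $I := \{i : c_i \neq 0\}$ is non-empty. Each $c_i$ with $i \in I$ is a non-zero polynomial, so has only finitely many zeros in $\mathbb{C}_\infty$; let $Z$ be the union of all these zero sets, which is a finite subset of $\mathbb{C}_\infty$.

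By hypothesis, there are infinitely many $\alpha \in \mathbb{C}_\infty$ such that the vanishing orders $\mathrm{ord}_\alpha(f_1), \ldots, \mathrm{ord}_\alpha(f_m)$ are pairwise distinct. Since $Z$ is finite, I can pick such an $\alpha$ with $\alpha \notin Z$. For this $\alpha$ we have $\mathrm{ord}_\alpha(c_i) = 0$ for every $i \in I$, and hence
\[
\mathrm{ord}_\alpha(c_i f_i) \;=\; \mathrm{ord}_\alpha(c_i) + \mathrm{ord}_\alpha(f_i) \;=\; \mathrm{ord}_\alpha(f_i), \qquad i \in I,
\]
so the non-zero summands of $\sum_{i \in I} c_i f_i = 0$ have pairwise distinct orders at $\alpha$.

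Now let $i_0 \in I$ minimize $\mathrm{ord}_\alpha(c_i f_i)$ over $i \in I$. Since all these orders are distinct, the term $c_{i_0} f_{i_0}$ has strictly smaller order at $\alpha$ than every other summand; expanding each $c_i f_i$ as a power series in a local parameter at $\alpha$, the lowest-degree coefficient of the sum equals the (non-zero) leading coefficient of $c_{i_0} f_{i_0}$ and therefore cannot vanish, contradicting $\sum_i c_i f_i = 0$. Hence $I = \emptyset$, i.e. all $c_i = 0$. The only subtlety is the choice of $\alpha$ avoiding the finite set $Z$ while retaining the distinct-order property, which is handled by the infiniteness assumption in the hypothesis; otherwise the argument is a direct valuation-theoretic leading-term comparison.
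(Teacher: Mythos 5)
Your proof is correct, and it hinges on the same mechanism as the paper's: at a point $\alpha$ where the vanishing orders are pairwise distinct, the term of strictly minimal order cannot be cancelled by the others. The execution differs in two ways that make your argument a bit cleaner. First, by clearing denominators you reduce to $c_i\in\overline{k}[t]$, so a single well-chosen $\alpha$ (outside the finite zero set of the nonzero $c_i$'s) already yields $\mathrm{ord}_\alpha\bigl(\sum_i c_if_i\bigr)=\min_{i\in I}\mathrm{ord}_\alpha(f_i)<\infty$, an immediate contradiction with the sum being the zero function; you therefore need no induction. The paper instead keeps the $c_i$ rational, picks by pigeonhole an index $j$ whose order is minimal for infinitely many $\alpha$, multiplies by $(t-\alpha)^{-\mathrm{ord}_\alpha(f_j)}$ and evaluates at each such $\alpha$ to conclude $c_j(\alpha)=0$ at infinitely many points, hence $c_j=0$, and then inducts on $m$. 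What the paper's route buys is that it never normalizes the coefficients and works directly with evaluation of the entire functions; what yours buys is the elimination of both the induction and the pigeonhole selection of a uniformly minimal index. The only facts you use beyond the paper's are standard: that a nonzero element of $\mathcal{E}$ has finite vanishing order at every point and admits a local expansion in $t-\alpha$ whose coefficients all vanish only for the zero function, and that $\mathrm{ord}_\alpha$ is additive on products --- both unproblematic in this non-archimedean setting.
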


\begin{proof}
    We prove this lemma by induction. In the case $m=1$, on the basis of the assumption that $f_1\neq 0$ and $f_1\in\mathcal{E}$, it follows that there are infinitely many $\alpha\in\mathbb{C}_\infty$ that satisfy $f_1|_{t=\alpha}\neq 0$ or otherwise $f_1$ must vanish identically by the entireness. Indeed, non-zero elements in $\mathcal{E}$ only admit finitely many zeros in any bounded disc centred on $0$ according to \cite[Prop.2.11]{Go96}. Therefore, there are infinitely many $\alpha\in\mathbb{C}_\infty$ such that $c_1|_{t=\alpha}=0$ since $c_1f_1=0$. Then, because $c_1\in\overline{k}(t)$ and every non-zero element in $\overline{k}(t)$ has only finitely many zeros, we conclude that $c_1$ vanishes identically and we complete the case of $m=1$. In the case $m=N$, based on the assumptions, we can choose $1\leq j\leq N$ such that there are infinitely many $\alpha\in\mathbb{C}_\infty$ with $\mathrm{ord}_\alpha(f_j)<\mathrm{ord}_\alpha(f_i)$ for all $1\leq i\neq j\leq N$. Without loss of generality, we may assume that $j=N$. Since $c_1,\dots,c_N$ are rational functions, they only admit finitely many poles. Thus, there are infinitely many $\alpha\in\mathbb{C}_\infty$ such that
    \[
        (c_1f_1+\cdots+c_Nf_N)(t-\alpha)^{-\mathrm{ord}_\alpha(f_N)}
    \]
    is defined at $t=\alpha$. By evaluating the above quantity at $t=\alpha$, we obtain $c_N|_{t=\alpha}=0$ for infinitely many $\alpha\in\mathbb{C}_\infty$ since
    \[
    f_i(t-\alpha)^{-\mathrm{ord}_\alpha(f_N)}\mid_{t=\alpha}=0
    \]
    for all $1\leq i<N$ and
    \[
    f_N(t-\alpha)^{-\mathrm{ord}_\alpha(f_N)}\mid_{t=\alpha}\neq 0.
    \]
    Hence,  $c_N$ vanishes identically, and the desired result  follows immediately from the induction hypothesis.
\end{proof}

In what follows, we establish a criterion for elements being a torsion element in certain $\Ext^1$-modules.

\begin{lem}[cf.~{\cite[Thm.~2.5.2]{CPY19}}]\label{Lem:Torsion_of_Ext}
    Let $S=\{\mathfrak{s}_0, \dots, \mathfrak{s}_m\}\subset I(w)$ be $g$-independent with $\mathfrak{s}_0=(w)$, and let $\mathfrak{Q}_i\in\mathbb{D}_{\mathfrak{s}_i}$ such that $\mathscr{L}_{[i],j}(\theta)\neq 0$ for each $0\leq i\leq m$ and $1\leq j\leq r_i$. Let $M_\star$ be the Frobenius module defined by $\Phi_\star$. If $a_0\mathscr{L}_{[0]}+\cdots + a_m\mathscr{L}_{[m]}$ vanishes at $t=\theta$, then $[M_\star]$ is an $\mathbb{F}_q[t]$-torsion element in $\Ext_\mathscr{F}^1(\mathbf{1},M'_\star)$.
\end{lem}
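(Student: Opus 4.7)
The plan is to follow the strategy of \cite[Thm.~2.5.2]{CPY19}, combining the ABP criterion \cite[Thm.~3.1.1]{ABP04} with Anderson's theorem (Theorem~\ref{Thm:Anderson_Ext}). By construction, $\psi_\star$ is a rigid analytic trivialization of $M_\star$, satisfying $\psi_\star^{(-1)}=\Phi_\star\psi_\star$, with last entry $\mathscr{L}_\star:=a_0\mathscr{L}_{[0]}+\cdots+a_m\mathscr{L}_{[m]}$. The hypothesis $\mathscr{L}_\star(\theta)=0$ yields a $\overline{k}$-linear relation $\lambda\psi_\star(\theta)=0$ with $\lambda=(0,\ldots,0,1)$, which by ABP lifts to a $\overline{k}[t]$-linear relation $\rho(t)\psi_\star(t)=0$ with $\rho(\theta)=\lambda$. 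Writing $\rho=(\rho',h)$ in block form compatible with $\psi_\star=(\psi'_\star,\mathscr{L}_\star)^{\mathrm{tr}}$, this reads $\rho'\psi'_\star+h\mathscr{L}_\star=0$ with $h(\theta)=1$, so $h$ is nonzero.

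Next, the $g$-independence of $S$ together with the hypothesis $\mathscr{L}_{[i],j}(\theta)\neq0$ forces, by Lemma~\ref{Eq:Vanishing_Order_of_psi''}, that the entries of $\psi'_\star$ have pairwise distinct vanishing orders at $t=\theta^{q^N}$ for each $N\in\mathbb{Z}_{>0}$ (the entry $\Omega^w$ contributes order $w$, while each block $\psi''_{[i]}$ contributes the orders $g(\mathfrak{s}_i)\subset\{1,\ldots,w-1\}$, pairwise disjoint across $i$). Hence Lemma~\ref{Lem:Vanishing_of_Entire_Functions} yields $\overline{k}(t)$-linear independence of these entries. Taking the $(-1)$-twist of $\rho'\psi'_\star+h\mathscr{L}_\star=0$ and substituting the Frobenius identities $(\psi'_\star)^{(-1)}=\Phi'_\star\psi'_\star$ and $\mathscr{L}_\star^{(-1)}=\nu_\star\psi'_\star+\mathscr{L}_\star$, together with $\mathscr{L}_\star=-\rho'\psi'_\star/h$, gives
\[
\bigl((\rho')^{(-1)}\Phi'_\star+h^{(-1)}\nu_\star-(h^{(-1)}/h)\rho'\bigr)\psi'_\star=0.
\]
Linear independence then forces the vector identity $\nu_\star=\delta^{(-1)}\Phi'_\star-\delta$ over $\overline{k}(t)$, where $\delta:=-\rho'/h$.

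It remains to produce a non-zero $a\in\mathbb{F}_q[t]$ with $a\delta$ having entries in $\overline{k}[t]$; since $a^{(-1)}=a$, this yields $a\nu_\star=(a\delta)^{(-1)}\Phi'_\star-(a\delta)$, and by Theorem~\ref{Thm:Anderson_Ext} one obtains $a\cdot[M_\star]=0$ in $\Ext^1_{\mathscr{F}}(\mathbf{1},M'_\star)$, establishing the $\mathbb{F}_q[t]$-torsion of $[M_\star]$. The main obstacle is this final clearing-denominators step: the coefficients of $h$ a priori lie in the infinite extension $\overline{k}/\mathbb{F}_q$, so one cannot simply normalize to $\mathbb{F}_q[t]$. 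To overcome this, I would iterate the Frobenius action $\rho_{n+1}=\rho_n^{(-1)}\Phi_\star$, which preserves the submodule of $\overline{k}[t]$-linear relations of $\psi_\star$ and shifts the last coordinate to $h_{n+1}=h_n^{(-1)}$. Combining the resulting family of relations via a norm-style product (or a finite-generation argument for the relation submodule) should yield the required multiplier $a\in\mathbb{F}_q[t]\setminus\{0\}$ making $a\delta$ integral, thereby finishing the proof.
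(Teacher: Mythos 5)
Your proposal follows the same route as the paper's proof: apply \cite[Thm.~3.1.1]{ABP04} to $\psi_\star$ to lift the relation at $t=\theta$ to a $\overline{k}[t]$-relation, normalize the last coordinate to $1$, use the $g$-independence together with Lemma~\ref{Eq:Vanishing_Order_of_psi''} and Lemma~\ref{Lem:Vanishing_of_Entire_Functions} to kill the residual vector $\mathbf{f}'-\mathbf{f}'^{(-1)}\Phi_\star$, and read off $\nu_\star=\delta^{(-1)}\Phi'_\star-\delta$ over $\overline{k}(t)$, which after clearing denominators trivializes a multiple of $[M_\star]$. All of these steps are correct and match the paper.

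The one place you diverge is the step you yourself flag as the ``main obstacle,'' and there your proposed fix is not the right mechanism. Iterating $\rho_{n+1}=\rho_n^{(-1)}\Phi_\star$ produces nothing new: the space of $\overline{k}(t)$-linear relations among the entries of $\psi_\star$ is one-dimensional (that is exactly what the vanishing-order argument shows), so each $\rho_n$ is a scalar multiple of $\mathbf{f}'$, and a ``norm-style product'' of linear relations is not a linear relation. The correct resolution is the paper's citation of \cite[Prop.~2.2.1]{CPY19}, whose content is an elementary denominator-stability argument: from $\delta=\delta^{(-1)}\Phi'_\star-\nu_\star$ with $\Phi'_\star$ and $\nu_\star$ polynomial, the monic least common denominator $b\in\overline{k}[t]$ of the entries of $\delta$ must divide $b^{(-1)}$; since both are monic of the same degree, $b=b^{(-1)}$, i.e., every coefficient of $b$ is fixed by the $q$-power map, so $b\in\mathbb{F}_q[t]$ and $b\delta\in\Mat_{1\times d}(\overline{k}[t])$ as required. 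With that substitution your argument closes; without it, the final step remains a genuine gap.
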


\begin{proof}
    It suffices to show that there exists a non-zero $b\in\mathbb{F}_q[t]$ such that $b[M_\star]$ represents the trivial class in $\Ext_\mathscr{F}^1(\mathbf{1},M'_\star)$. 
    Consider the associated Frobenius difference equation $\psi_\star^{(-1)}=\Phi_\star\psi_\star$ defined in (\ref{Eq:Fiber_Phi}) and (\ref{Eq:Fiber_psi}). 
    Then, according to \cite[Thm. ~3.1.1]{ABP04}, there exists $\mathbf{f}=(f_0,\vec{f}_1,\dots,\vec{f}_m,f_{m+1})$ with $f_0,f_{m+1}\in \overline{k}[t]$ and $\vec{f}_i\in\Mat_{1\times (r_i-1)}(\overline{k}[t])$ for each $1\leq i\leq m$ such that $\mathbf{f}\psi_\star=0$ and $\mathbf{f}(\theta)=(0,\dots,0,1)$. We set $\mathbf{f}':=(f'_0,\vec{f'}_1,\dots,\vec{f'}_m,1)$, where 
    $$f'_0=f_0/f_{m+1}\in\overline{k}(t) \mbox{ and } \vec{f'}_i=\frac{1}{f_{m+1}}\vec{f}_i\in\Mat_{1\times (r_i-1)}(\overline{k}(t))$$
    for each $1\leq i\leq m-1$. Note that $\mathbf{f}'\psi_\star=0$; thus, $(\mathbf{f}'-\mathbf{f}'^{(-1)}\Phi_\star)\psi_\star=0$. Therefore, by setting
    $$\mathbf{f}'-\mathbf{f}'^{(-1)}\Phi_\star=(R_0,\vec{R}_1,\dots,\vec{R}_m,0),$$
    we obtain
    $$R_0\Omega^w+\vec{R}_1\cdot\psi''_{[1]}+\cdots+\vec{R}_m\cdot\psi''_{[m]}=0.$$
    Lemma~\ref{Eq:Vanishing_Order_of_psi''} yields
    $$\mathrm{ord}_{\theta^{q^N}}(\psi''_{[i]})\cap\mathrm{ord}_{\theta^{q^N}}(\psi''_{[j]})=g(\mathfrak{s}_i)\cap g(\mathfrak{s}_j)=\emptyset$$
    for all $N\in\mathbb{Z}_{\geq 0}$ $1\leq i\neq j\leq m$. One can now check directly that $$\mathrm{ord}_{\theta^{q^N}}\Bigl((\Omega^w,\psi''_{[1]},\dots,\psi''_{[m]})^{\mathrm{tr}}\Bigr)=\{w\}\cup g(\mathfrak{s}_1)\cup\cdots\cup g(\mathfrak{s}_m)$$ is a disjoint union for all $N\in\mathbb{Z}_{\geq 0}$. Then, Lemma~\ref{Lem:Vanishing_of_Entire_Functions} shows that $R_0=0$ and $\vec{R}_i=\vec{0}$ for all $1\leq i\leq m$. If we consider
    $$\gamma:=
    \begin{pmatrix}
        1 &  &  &  &  \\
         & \mathbb{I}_{r_1-1} &  &  & \\
         &  & \ddots &  &  \\
         &  &  & \mathbb{I}_{r_m-1} & \\
        f'_0 & \vec{f}'_1 & \cdots & \vec{f}'_m & 1
    \end{pmatrix},$$
    where $\mathbb{I}_n$ denote by $n\times n$ identity matrix, then we have
    $$\gamma^{(-1)}\Phi_\star=
    \begin{pmatrix}
        \Phi'_\star &  \\
         & 1 
    \end{pmatrix}\gamma.$$
    Now, according to \cite[Prop.~2.2.1]{CPY19}, there exists a non-zero $b\in\mathbb{F}_q[t]$ such that $bf'_0\in\overline{k}[t]$ and $b\vec{f}'_i\in\Mat_{1\times (r_i-1)}(\overline{k}[t])$ for all $1\leq i\leq m$. If we consider $\nu=(bf'_0,b\vec{f}'_1,\dots,b\vec{f}'_m)$ and 
    $$\delta:=\begin{pmatrix}
        1 &  &  &  &  \\
         & \mathbb{I}_{r_1-1} &  &  & \\
         &  & \ddots &  &  \\
         &  &  & \mathbb{I}_{r_m-1} & \\
        bf'_0 & b\vec{f}'_1 & \cdots & b\vec{f}'_m & 1
    \end{pmatrix},$$
    then we have
    $$\delta^{(-1)}
    \begin{pmatrix}
        \Phi_\star &  \\
        \nu & 1 
    \end{pmatrix}=
    \begin{pmatrix}
        \Phi'_\star &  \\
         & 1 
    \end{pmatrix}\delta.$$
    Consequently, by changing the $\overline{k}[t]$-basis of $b\ast M_\star$ with $\delta$, we conclude that $b [M_\star]$ represents the trivial class in $\Ext^1_{\mathscr{F}}(\mathbf{1},M'_\star)$, and the desired result follows immediately.
\end{proof}

\subsection{Anderson-Thakur polynomials and special values}\label{AT-polynomial}
    Before we move to the main theorem, let us first briefly review Anderson-Thakur polynomials~\cite{AT90}. These polynomials are needed to describe the applications of Theorem \ref{Thm:Main_Thm} to $\infty$-adic MZVs, $\infty$-adic AMZVs and CMPLs at algebraic points. Set $F_0:=1$ and $F_i:=\underset{j=1}{\overset{i}{\prod}}(t^{q^i}-\theta^{q^j})$ and define the Anderson-Thakur polynomials $H_n\in A[t]$ by the following generating function: 
    $$\left(1-\underset{i=0}{\overset{\infty}{\sum}}\frac{F_i}{D_i\mid_{\theta=t}}x^{q^i}\right)^{-1}=\underset{n=0}{\overset{\infty}{\sum}}\frac{H_n}{\Gamma_{n+1}\mid_{\theta=t}}x^n.$$
In \cite[Thm. 3.3]{Ch17}, $H_n$ is explicitly given for some specific $n\in\mathbb{Z}$, as follows:
\begin{eg}
\begin{align*}
&H_{0}(t)=1,\ \ H_{q^2-q-1}(t)=\Gamma_{q^2-q}|_{\theta=t},\ \ H_{q^3-1}(t)=\Gamma_{q^3}|_{\theta=t},\  H_{q^2-q}(t)=\Gamma_{q^2-q+1}|_{\theta=t}\frac{(t-\theta^q)^{q-1}}{L_1^{q-1}|_{\theta=t}}. 
\end{align*}
\end{eg}
    
    Let $n\in\mathbb{Z}_{>0}$ and $i\in\mathbb{Z}_{\geq 0}$. We set the power sum to be 
    $$S_i(n):=\sum_{a\in A_{i+}}\frac{1}{a^n}\in k .$$
    Then, an important identity established in \cite{AT90} is the following:
    \begin{equation}\label{Eq:Identity_of_AT_polynomials}
        (\Omega^nH_{n-1})^{(i)}\mid_{t=\theta}=\frac{\Gamma_nS_i(n)}{\Tilde{\pi}^n}.
    \end{equation}
    Based on (\ref{Eq:Identity_of_AT_polynomials}), it has been shown that many special values appear in the specialization of the deformation series given in (\ref{Eq:Deformation_Series}). We collect some results from \cite{AT90}, \cite{P08}, \cite{AT09}, \cite{C14}, \cite{C16}, \cite{CPY19}, and \cite{H20} to state the following proposition.
    
    \begin{prop}\label{Prop:Specialization}
        Let $\mathfrak{s}=(s_1,\dots,s_r)\in\mathbb{Z}_{>0}^r$ and $\mathfrak{Q}=(Q_1,\dots,Q_r)\in\overline{k}[t]^{r}$. We set 
        \[
            \mathscr{L}:=\mathscr{L}_{\mathfrak{s},\mathfrak{Q}}:=
            \sum_{\ell_1>\cdots>\ell_{r}\geq 0}
            (\Omega^{s_{r}}Q_{r})^{(\ell_r)}\cdots(\Omega^{s_{1}}Q_{1})^{(\ell_1)}\in\overline{k}\llbracket t\rrbracket.
        \]
        \begin{enumerate}
            \item If $\mathfrak{Q}=(u_1,\dots,u_r)\in\mathbb{D}_\mathfrak{s}\cap (\overline{k})^r$, then
            \[
                \mathscr{L}\mid_{t=\theta}=\frac{\Li_\mathfrak{s}(u_1,\dots,u_r)}{\Tilde{\pi}^{s_1+\cdots+s_r}}.
            \]
            \item If $\mathfrak{Q}=(H_{s_1-1},\dots,H_{s_r-1})$, then
            \[
                \mathscr{L}\mid_{t=\theta}=\frac{\Gamma_{s_1}\cdots\Gamma_{s_r}\zeta_A(\mathfrak{s})}{\Tilde{\pi}^{s_1+\cdots+s_r}}.
            \]
            \item If $\mathfrak{Q}=(\gamma_1H_{s_1-1},\dots,\gamma_rH_{s_r-1})$, where $\gamma_i$ is a fixed $(q-1)$-th root of $\epsilon_i\in\mathbb{F}_q^\times$ for $\boldsymbol{\epsilon}=(\epsilon_1,\dots,\epsilon_r)\in(\mathbb{F}_q^\times)^r$, then
            \[
                \mathscr{L}\mid_{t=\theta}=\frac{\gamma_1\cdots\gamma_r\Gamma_{s_1}\cdots\Gamma_{s_r}\zeta_A(\mathfrak{s};\boldsymbol{\epsilon})}{\Tilde{\pi}^{s_1+\cdots+s_r}}.
            \]
        \end{enumerate}
    \end{prop}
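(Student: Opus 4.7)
The plan is to reduce all three identities to direct termwise computations built on two fundamental evaluations. The first is the specialization of the Frobenius twists of $\Omega$: from the difference equation $\Omega^{(-1)}=(t-\theta)\Omega$ together with $\Omega(\theta)=1/\tilde\pi$, a short induction gives $\Omega^{(i)}(t)=\Omega(t)/\prod_{j=1}^{i}(t-\theta^{q^j})$, hence
\[
(\Omega^{s})^{(i)}\bigm|_{t=\theta}=\frac{1}{L_i^{s}\,\tilde\pi^{s}}
\qquad\text{for all } i\geq 0,\ s\geq 1.
\]
The second is the Anderson--Thakur identity (\ref{Eq:Identity_of_AT_polynomials}) recalled from \cite{AT90}, which yields $(\Omega^{n}H_{n-1})^{(i)}|_{t=\theta}=\Gamma_n S_i(n)/\tilde\pi^{n}$. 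Throughout, condition (\ref{Eq:Condition_of_Q_1}) ensures that $\mathscr{L}$ is entire and in particular converges at $t=\theta$, so termwise specialization is justified.

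For part (1), I plug $Q_j=u_j\in\overline{k}$ into $\mathscr{L}$. Since Frobenius twisting acts on coefficients, $(u_j)^{(\ell)}=u_j^{q^\ell}$, and a generic term becomes
\[
\prod_{j=1}^{r}u_j^{q^{\ell_j}}(\Omega^{s_j})^{(\ell_j)}\bigm|_{t=\theta}
=\prod_{j=1}^{r}\frac{u_j^{q^{\ell_j}}}{L_{\ell_j}^{s_j}\,\tilde\pi^{s_j}}.
\]
Summing over $\ell_1>\cdots>\ell_r\geq 0$ reproduces $\Li_{\mathfrak{s}}(u_1,\dots,u_r)/\tilde\pi^{s_1+\cdots+s_r}$ straight from definition (\ref{Eq:CMPL}).

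For part (2), I take $Q_j=H_{s_j-1}$ and apply the Anderson--Thakur identity factor by factor. Each generic term becomes $\prod_j \Gamma_{s_j}S_{\ell_j}(s_j)/\tilde\pi^{s_j}$, and summing the remaining nested sum
\[
\sum_{\ell_1>\cdots>\ell_r\geq 0}S_{\ell_1}(s_1)\cdots S_{\ell_r}(s_r)=\zeta_A(\mathfrak{s}),
\]
which is just the regrouping of the MZV series (\ref{Eq:Definition_of_MZVs}) by the degree vector $(\deg a_1,\dots,\deg a_r)=(\ell_1,\dots,\ell_r)$, gives the claimed identity.

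For part (3), I proceed as in (2) with $Q_j=\gamma_jH_{s_j-1}$. The single extra ingredient is the identity $\gamma_j^{q^{\ell}}=\gamma_j\,\epsilon_j^{\ell}$, which follows by induction from $\gamma_j^{q-1}=\epsilon_j\in\mathbb{F}_q^\times$ (and $\epsilon_j^{q}=\epsilon_j$). Pulling $\gamma_1\cdots\gamma_r$ out front leaves the factor $\epsilon_1^{\ell_1}\cdots\epsilon_r^{\ell_r}$ inside the sum, and combining with the power-sum identity reassembles the AMZV $\zeta_A(\mathfrak{s};\boldsymbol{\epsilon})$ via definition (\ref{Eq:Definition_of_AMZVs}). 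No substantive obstacle is expected; the proposition is essentially a bookkeeping consolidation of known results, and the only place requiring care is the clean verification of $(\Omega^{s})^{(i)}|_{t=\theta}=1/(L_i^{s}\tilde\pi^{s})$ and the handling of the $(q-1)$-th roots $\gamma_j$ under Frobenius.
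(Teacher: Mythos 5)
Your computation is correct and is precisely the standard derivation: the paper gives no proof of this proposition at all, merely collecting it from \cite{AT90}, \cite{C14}, \cite{CPY19} and \cite{H20}, and the remark immediately following it records only the one extra ingredient you also use for part (3), namely $\gamma^{(i)}=\gamma\cdot\epsilon^{i}$ for a fixed $(q-1)$-th root $\gamma$ of $\epsilon\in\mathbb{F}_q^{\times}$. The only point worth making explicit is that in parts (2) and (3) the entireness of $\mathscr{L}$ (needed to justify termwise specialization at $t=\theta$) is not a hypothesis of the statement but follows from the Anderson--Thakur bound $\lVert H_{n-1}\rVert_{\infty}<|\theta|_{\infty}^{nq/(q-1)}$, which shows that $(H_{s_1-1},\dots,H_{s_r-1})$ satisfies condition (\ref{Eq:Condition_of_Q_1}); once that is supplied, your two fundamental evaluations $(\Omega^{s})^{(i)}|_{t=\theta}=1/(L_i^{s}\tilde{\pi}^{s})$ and (\ref{Eq:Identity_of_AT_polynomials}), together with the regrouping of (\ref{Eq:Definition_of_MZVs}) and (\ref{Eq:Definition_of_AMZVs}) by degree vectors, give all three identities.
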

    
    \begin{rem}
        For our purpose, the identity of $\infty$-adic AMZVs given in Proposition~\ref{Prop:Specialization} is slightly different than the original version given in \cite[Thm.~3.4]{H20}. To derive Proposition~\ref{Prop:Specialization}~(3), we combine  (\ref{Eq:Identity_of_AT_polynomials}) with the fact that for each $\epsilon\in\mathbb{F}_q^\times$ and fixed $(q-1)$-th root $\gamma$ of $\epsilon$, we have $\gamma^{(i)}=\gamma\cdot\epsilon^i$ for all $i\in\mathbb{Z}_{\geq 0}$. Then, an argument similar to that of \cite[Thm.~3.4]{H20} gives the desired identity.
    \end{rem}

\section{The main result and applications}\label{Main and app}
In this section, we first state our main theorem and then present some applications.

\subsection{Main Theorem}\label{Main Theorem}
In what follows, we describe and prove our main result, a linear independence criterion for special values 
that appear in the specialization at $t=\theta$ of the deformation series given in (\ref{Eq:Deformation_Series}).

\begin{thm}\label{Thm:Main_Thm}
    For $w\in\mathbb{Z}_{>0}$, let $S=\{\mathfrak{s}_0,\dots, \mathfrak{s}_m\}\subset I(w)$ be $g$-independent with $\mathfrak{s}_0=(w)$ and let $\mathfrak{Q}_i\in\mathbb{D}_{\mathfrak{s}_i}$ such that $\mathscr{L}_{[i],j}(\theta)\neq 0$ for each $0\leq i\leq m$ and $1\leq j\leq r_i$.
    If we set $\mathscr{L}_{[i]}:=\mathscr{L}_{[i],\dep(\mathfrak{s}_i)}$, then the following set 
    \[
        \{ \mathscr{L}_{[0]}(\theta), \ldots,  \mathscr{L}_{[m]}(\theta)\}
    \]
    is $k$-linearly independent.
\end{thm}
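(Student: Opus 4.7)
The plan is to proceed by contradiction, combining Lemma~\ref{Lem:Torsion_of_Ext} with condition (\ref{Eq:Condition_of_Q_2}). Suppose $a_0, \dots, a_m \in k$ are not all zero with $\sum_{i=0}^m a_i \mathscr{L}_{[i]}(\theta) = 0$. After clearing denominators and using the natural $\mathbb{F}_q$-algebra identification $A = \mathbb{F}_q[\theta] \cong \mathbb{F}_q[t]$ via $\theta \mapsto t$, we may assume $a_0, \dots, a_m \in \mathbb{F}_q[t]$, not all zero. Form the Frobenius module $M_\star$ defined by $\Phi_\star$ from (\ref{Eq:Fiber_Phi}); Lemma~\ref{Lem:Torsion_of_Ext} then shows that $[M_\star] \in \Ext_{\mathscr{F}}^1(\mathbf{1}, M'_\star)$ is $\mathbb{F}_q[t]$-torsion, so there exists a nonzero $b \in \mathbb{F}_q[t]$ with $b \cdot [M_\star] = 0$.

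The vanishing of $b \cdot [M_\star]$ is equivalent to the existence of a row vector $\mathbf{w} = (w_0, \mathbf{w}_{[1]}, \dots, \mathbf{w}_{[m]}) \in \Mat_{1 \times \rank_{\overline{k}[t]} M'_\star}(\overline{k}[t])$ satisfying the coboundary equation $\mathbf{w}^{(-1)} \Phi'_\star - \mathbf{w} = b \nu_\star$, where $\nu_\star = (a_0 \nu_{[0]}, \dots, a_m \nu_{[m]})$. In the block decomposition of $\Phi'_\star$, the diagonal blocks $\Phi''_{[i]}$ for $i \geq 1$ are linked only through the shared first column via the $D_{[i]}$'s, so cross-terms involving $w_0$ are confined to the first coordinate of the equation. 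Reading off the $i$-th block for each $i \geq 1$ therefore produces the clean relation $\mathbf{w}_{[i]}^{(-1)} \Phi''_{[i]} - \mathbf{w}_{[i]} = b a_i \nu_{[i]}$. Unwinding the last coordinate and using $\nu_{[i]} = (0, \dots, 0, Q_{i r_i}^{(-1)}(t-\theta)^{s_{i r_i}})$ together with the bidiagonal structure of $\Phi''_{[i]}$, a short calculation yields $(\sigma - 1)\, w_{i, r_i - 1} = \sigma(b a_i Q_{i r_i})$ in $C^{\otimes s_{i r_i}}$, which (since $\overline{k}$ is perfect, so $\sigma f \in (\sigma - 1) C^{\otimes n}$ implies $f \in (\sigma - 1) C^{\otimes n}$) gives $b a_i Q_{i r_i} \in (\sigma - 1) C^{\otimes s_{i r_i}}$.

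This directly contradicts condition (\ref{Eq:Condition_of_Q_2}) whenever $a_i \neq 0$; hence $a_i = 0$ for all $i \geq 1$. The original hypothesis then collapses to $a_0 \mathscr{L}_{[0]}(\theta) = 0$, forcing $a_0 = 0$ by the nonvanishing assumption $\mathscr{L}_{[0]}(\theta) \neq 0$. The main subtlety lies in the block-decomposition step: one needs to verify carefully that the shared $C^{\otimes w}$-column of $\Phi'_\star$ only contributes to the first coordinate of the coboundary equation, so that the $i \geq 1$ blocks decouple cleanly and condition (\ref{Eq:Condition_of_Q_2}) is applicable to each summand in turn. Once this decoupling is in hand, the passage through $C^{\otimes s_{i r_i}}$ and the final contradiction are essentially mechanical.
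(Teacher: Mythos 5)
Your proposal is correct and follows essentially the same route as the paper: a contradiction via Lemma~\ref{Lem:Torsion_of_Ext}, followed by extracting the $C^{\otimes s_{ir_i}}$-component of the trivialized class $b[M_\star]$ and invoking condition (\ref{Eq:Condition_of_Q_2}) to kill each $a_i$ with $i\geq 1$, then using $\mathscr{L}_{[0]}(\theta)\neq 0$ to kill $a_0$. The only difference is cosmetic — the paper packages your explicit coboundary computation $\mathbf{w}^{(-1)}\Phi'_\star-\mathbf{w}=b\nu_\star$ and last-coordinate extraction as the projection $\Delta:M'_\star/(\sigma-1)M'_\star\twoheadrightarrow\oplus_i\mathbf{C}^{\otimes s_{ir_i}}(\overline{k})$ and phrases condition (\ref{Eq:Condition_of_Q_2}) as non-torsionness of the points $\mathbf{v}_{[i]}$ (and your appeal to perfectness of $\overline{k}$ is unnecessary, since $\sigma f-f=(\sigma-1)f$ already gives $\sigma f\in(\sigma-1)C^{\otimes n}\Leftrightarrow f\in(\sigma-1)C^{\otimes n}$).
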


\begin{proof}
    By contrast, suppose that $\{ \mathscr{L}_{[0]}(\theta), \ldots,  \mathscr{L}_{[m]}(\theta)\}$ is a $k$-linearly dependent set. Then, there exists $a_0,\dots,a_m\in\mathbb{F}_q[t]$ not all zero such that
    \begin{equation}\label{Eq:thm3.1}
        a_0(\theta)\mathscr{L}_{[0]}(\theta)+\cdots+a_m(\theta)\mathscr{L}_{[m]}(\theta)=0.
    \end{equation}
    
    Now, we consider the Frobenius difference equation associated with the $\mathbb{F}_q[t]$-linear combinations $a_0\mathscr{L}_{[0]}+\cdots+a_m\mathscr{L}_{[m]}$ given in (\ref{Eq:Fiber_Phi}) and (\ref{Eq:Fiber_psi}). Let $M_\star$ be the Frobenius module defined by $\Phi_\star$. Then, Lemma~\ref{Lem:Torsion_of_Ext} shows that $[M_\star]$ represents a torsion element in $\Ext^1_{\mathscr{F}}(\mathbf{1},M'_\star)$. Thus, there is a non-zero element $b\in\mathbb{F}_q[t]$ such that $b[M_\star]$ represents the trivial class in $\Ext^1_{\mathscr{F}}(\mathbf{1},M'_\star)$. Let $\{x_0,x_{11},\dots,x_{1r_1},\dots,x_{m1},\dots,x_{mr_m}\}$ be a $\overline{k}[t]$-basis of $M'_\star$ such that the $\sigma$-action is represented by $\Phi'_\star$. Then, we have the following $\mathbb{F}_q[t]$-module isomorphism (see \cite[Thm.~5.2.1]{CPY19}):
    \begin{align*}
        \Ext_\mathscr{F}^1(\mathbf{1},M'_\star)&\cong M'_\star/(\sigma-1)M'_\star\\
        [b\ast M_\star]&\mapsto ba_0Q_{01}^{(-1)}(t-\theta)^wx_0+\sum_{i=1}^mba_iQ_{ir_i}^{(-1)}(t-\theta)^{s_{ir_i}}x_{ir_i}+(\sigma-1)M'_\star.
    \end{align*}
    Now, we consider the natural projection map
    \begin{align*}
        \pi:M'_\star&\twoheadrightarrow\oplus_{i=1}^mC^{\otimes s_{ir_i}}\\
        g_0x_0+\sum_{i=1}^m\sum_{j=1}^{r_i}g_{ij}x_{ij}&\mapsto(g_{1r_1},\dots,g_{mr_m}).
    \end{align*}
    Since $\pi\circ(\sigma-1)=(\sigma-1)\circ\pi$ and $\pi$ is surjective, we deduce that
    \begin{equation}\label{Eq:Delta_Map}
        \Delta:E'_\star(\overline{k})\cong M'_\star/(\sigma-1)M'_\star\twoheadrightarrow\oplus_{i=1}^m\left(C^{\otimes s_{ir_i}}/(\sigma-1)C^{\otimes s_{ir_i}}\right)\cong\oplus_{i=1}^m\mathbf{C}^{\otimes s_{ir_i}}(\overline{k}).
    \end{equation}
    Let $\mathbf{v}_{[i]}$ be the point in $\mathbf{C}^{\otimes s_{ir_i}}(\overline{k})$ corresponding to  $Q_{ir_i}^{(-1)}(t-\theta)^{s_{ir_i}}$ in $C^{\otimes s_{ir_i}}/(\sigma-1)C^{\otimes s_{ir_i}}$ for each $1\leq i\leq m$. Note that $\mathbf{v}_{[i]}\neq 0$ for each $1\leq i\leq m$ by (\ref{Eq:Condition_of_Q_2}). 
    Then, the trivial class $b[M_\star]$ is mapped to $([ba_1]_{s_{1r_1}}\mathbf{v}_{[1]},\dots,[ba_m]_{s_{mr_m}}\mathbf{v}_{[m]})$ in $\oplus_{i=1}^m\mathbf{C}^{\otimes s_{ir_i}}(\overline{k})$ by $\Delta$ in (\ref{Eq:Delta_Map}). If there is an $a_i$ that is non-zero, then $\mathbf{v}_{[i]}$ is a non-zero $\mathbb{F}_q[t]$-torsion element in $\mathbf{C}^{\otimes s_{ir_i}}(\overline{k})$ because $b\neq 0$, $a_i\neq 0$ and $\mathbf{v}_{[i]}\neq 0$, while $[ba_i]_{s_{ir_i}}\mathbf{v}_{[i]}=0$. This situation leads to a contradiction since $\mathbf{v}_{[i]}$ is not a torsion element according to (\ref{Eq:Condition_of_Q_2}).
    Consequently, we must have $a_i=0$ for each $1\leq i\leq m$. 
    Thus, by \eqref{Eq:thm3.1}, $a_0$ also vanishes, and we obtain the desired result.
\end{proof}

\begin{rem}\label{Rem:MZ_property}
    Notably, the $k$-linear independence of $\{ \mathscr{L}_{[0]}(\theta), \ldots,  \mathscr{L}_{[m]}(\theta)\}$ is equivalent to the $k$-linear independence of $\{ \tilde{\pi}^w\mathscr{L}_{[0]}(\theta), \ldots, \tilde{\pi}^w\mathscr{L}_{[m]}(\theta)\}$. Also, we note that each element in $\{ \tilde{\pi}^w\mathscr{L}_{[0]}(\theta), \ldots, \tilde{\pi}^w\mathscr{L}_{[m]}(\theta)\}$ has the MZ (multizeta) property with weight $w$ in the sense of \cite[Def.~3.4.1]{C14} (cf.~\cite[Sec.~4.2]{H20}). Hence, on the basis of \cite[Prop.~4.3.1]{C14}, we conclude that the $k$-linear independence of $\{ \mathscr{L}_{[0]}(\theta), \ldots,  \mathscr{L}_{[m]}(\theta)\}$ is equivalent to the $\overline{k}$-linear independence of $\{ \mathscr{L}_{[0]}(\theta), \ldots,  \mathscr{L}_{[m]}(\theta)\}$.
\end{rem}

\begin{rem}
    Let $r\in\mathbb{Z}_{>0}$. Let $\mathfrak{s}=(s_1,\dots,s_r)\in\mathbb{Z}_{>0}^r$ and $\mathfrak{Q}=(Q_1,\dots,Q_r)\in\overline{k}[t]^r$. The difficulty in applying Theorem~\ref{Thm:Main_Thm} is checking whether $\mathfrak{Q}$ satisfies (\ref{Eq:Condition_of_Q_2}). Note that checking condition (\ref{Eq:Condition_of_Q_2}) is equivalent to showing that the corresponding point of $Q_r$ in $C^{\otimes s_r}/(\sigma-1)C^{\otimes s_r}\cong \mathbf{C}^{\otimes s_r}(\overline{k})$ is not an $\mathbb{F}_q[t]$-torsion point. In general, it is not easy to determine whether a point is an $\mathbb{F}_q[t]$-torsion in the tensor powers of a Carlitz module. However, in the case of $\mathfrak{Q}=(Q_1,\dots,Q_r)\in k[t]^r$, we may use \cite[Prop.~1.11.2]{AT90} to study the $k$-rational $\mathbb{F}_q[t]$-torsion points in the tensor powers of a Carlitz module. A detailed discussion of this topic is presented in the next subsection.
\end{rem}

\subsection{Some applications}
    As an application, we describe how to associate a partition of $\{1,\dots,w-1\}$ to a $k$-linearly independent set of  weight $w$ special values, including MZVs. Consider the inverse map of $g$
\begin{align*}
    g^{-1}:g(I(w))\subset J(w)&\rightarrow I(w)\\
    \{x_1>\cdots>x_{r-1}\}&\mapsto (w-x_1, x_1-x_2, \ldots, x_{r-2}-x_{r-1}, x_{r-1} ). 
\end{align*}

A partition $P$ of $\{1,\dots,w-1\}$ is a subset of $J(w)$ such that for all $\mathcal{P}\in P$ and $\mathcal{P}'\in P$, it satisfies that $\mathcal{P}\cap\mathcal{P}'=\emptyset$ and $\bigcup_{\mathcal{P}\in P}\mathcal{P}=\{1,\dots,w-1\}$. A partition $P$ of $\{1,\dots,w-1\}$ is called $q$-admissible if for each $\mathcal{P}\in P$ the minimal element of $\mathcal{P}$ is not divisible by $q-1$. For example, we can give the following $k$-linearly independent set of $\infty$-adic MZVs and CMPLs at rational points whose indices come from the $q$-admissible partition.

\begin{prop}\label{Prop:Non-Torsion}
    For $w\in\mathbb{Z}_{>0}$, let $S=\{\mathfrak{s}_0,\dots, \mathfrak{s}_m\}\subset I(w)$ be $g$-independent with $\mathfrak{s}_0=(w)$ and $\mathfrak{s}_i=(s_{i1},\dots,s_{ir_i})$. Suppose that $r_i:=\dep(\mathfrak{s}_i)$ and $s_{ir_i}$ is not divisible by $q-1$; then, $\mathfrak{Q}_i=(Q_{i1},\dots,Q_{ir_i})\in k[t]^r$ satisfying (\ref{Eq:Condition_of_Q_1}) automatically satisfies (\ref{Eq:Condition_of_Q_2}). In particular, let $P=\{\mathcal{P}_1,\dots,\mathcal{P}_m\}$ be a $q$-admissible partition of $\{1,\dots,w-1\}$. We set $\mathfrak{s}_0:=(w)$ and $\mathfrak{s}_i:=g^{-1}(\mathcal{P}_i)$ for $1\leq i\leq m$. Then, the following collection of $\infty$-adic MZVs of weight $w$
    $$\{\zeta_A(w),\zeta_A(\mathfrak{s}_1),\dots,\zeta_A(\mathfrak{s}_m)\}$$
    is a $k$-linear independent set.
\end{prop}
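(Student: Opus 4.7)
The plan is to verify the hypotheses of Theorem~\ref{Thm:Main_Thm} and conclude via the specialization formulas in Proposition~\ref{Prop:Specialization}. Two assertions need to be proved: the technical first claim that $\mathfrak{Q}_i\in k[t]^{r_i}$ satisfying (\ref{Eq:Condition_of_Q_1}) automatically satisfies (\ref{Eq:Condition_of_Q_2}) when $(q-1)\nmid s_{ir_i}$, and the resulting $k$-linear independence statement for MZVs indexed by a $q$-admissible partition.

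For the first claim, we reformulate condition (\ref{Eq:Condition_of_Q_2}) through the Anderson isomorphism $C^{\otimes s_{ir_i}}/(\sigma-1)C^{\otimes s_{ir_i}}\cong\mathbf{C}^{\otimes s_{ir_i}}(\overline{k})$ supplied by Theorem~\ref{Thm:Anderson_Ext}: it says precisely that the image of $Q_{ir_i}$ in $\mathbf{C}^{\otimes s_{ir_i}}(\overline{k})$ is not an $\mathbb{F}_q[t]$-torsion point. Since $Q_{ir_i}\in k[t]$, this image lies in the $k$-rational subgroup $\mathbf{C}^{\otimes s_{ir_i}}(k)$, so it suffices to rule out $k$-rational torsion. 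The input here is \cite[Prop.~1.11.2]{AT90}, which characterises the $k$-rational torsion submodule of $\mathbf{C}^{\otimes s}$ and in particular yields triviality of this submodule whenever $(q-1)\nmid s$.

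For the second claim, given a $q$-admissible partition $P=\{\mathcal{P}_1,\dots,\mathcal{P}_m\}$ of $\{1,\dots,w-1\}$, we set $\mathfrak{s}_0:=(w)$ and $\mathfrak{s}_i:=g^{-1}(\mathcal{P}_i)$ for $1\leq i\leq m$. Because $g(\mathfrak{s}_0)=\{0\}$, $g(\mathfrak{s}_i)=\mathcal{P}_i\subset\{1,\dots,w-1\}$, and the $\mathcal{P}_i$'s are pairwise disjoint, the set $S=\{\mathfrak{s}_0,\dots,\mathfrak{s}_m\}$ is $g$-independent. We take the Anderson-Thakur polynomial tuple $\mathfrak{Q}_i:=(H_{s_{i1}-1},\dots,H_{s_{ir_i}-1})\in A[t]^{r_i}$, which satisfies the growth condition (\ref{Eq:Condition_of_Q_1}) by the classical degree bound for the $H_n$'s. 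The explicit formula for $g^{-1}$ shows that the last coordinate $s_{ir_i}$ of $\mathfrak{s}_i$ equals the minimum of $\mathcal{P}_i$, which is not divisible by $q-1$ by $q$-admissibility; hence the first claim supplies condition (\ref{Eq:Condition_of_Q_2}). Proposition~\ref{Prop:Specialization}(2) identifies $\mathscr{L}_{[i],j}(\theta)$ with a non-zero $k$-multiple of $\zeta_A((s_{i1},\dots,s_{ij}))/\tilde{\pi}^{s_{i1}+\cdots+s_{ij}}$, and Thakur's non-vanishing theorem \cite{Th09a} ensures $\mathscr{L}_{[i],j}(\theta)\neq 0$ for every relevant $i,j$. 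Theorem~\ref{Thm:Main_Thm} then yields the $k$-linear independence of $\{\mathscr{L}_{[i]}(\theta)\}_{i=0}^{m}$, which after clearing the common factor $\tilde{\pi}^w$ and the Carlitz gamma constants gives the desired $k$-linear independence of $\{\zeta_A(w),\zeta_A(\mathfrak{s}_1),\dots,\zeta_A(\mathfrak{s}_m)\}$.

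The main obstacle is the first claim: recognising (\ref{Eq:Condition_of_Q_2}) as the non-torsion condition on a $k$-rational point of $\mathbf{C}^{\otimes s_{ir_i}}$ and invoking \cite[Prop.~1.11.2]{AT90} to suppress $k$-rational torsion when $(q-1)\nmid s_{ir_i}$. All remaining ingredients --- the combinatorial $g$-independence, the growth bound for Anderson-Thakur polynomials, the specialization formula, and Thakur's non-vanishing --- are essentially bookkeeping once this identification is in place.
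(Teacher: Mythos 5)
Your proposal is correct and follows essentially the same route as the paper: both reduce condition (\ref{Eq:Condition_of_Q_2}) to the non-torsion of a $k$-rational point of $\mathbf{C}^{\otimes s_{ir_i}}$ and invoke \cite[Prop.~1.11.2]{AT90} to kill $k$-rational torsion when $(q-1)\nmid s_{ir_i}$, then combine $g$-independence of $\{(w)\}\cup\{g^{-1}(\mathcal{P}_i)\}$ with Theorem~\ref{Thm:Main_Thm} and Proposition~\ref{Prop:Specialization}. Your write-up is in fact slightly more explicit than the paper's, since you also verify the auxiliary hypotheses (the growth condition for the Anderson--Thakur tuple and the non-vanishing $\mathscr{L}_{[i],j}(\theta)\neq 0$ via Thakur's non-vanishing theorem) that the paper leaves implicit.
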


\begin{proof}
    Since we know that the $k$-rational torsion elements $\mathbf{C}^{\otimes{s_{ir_i}}}(k)_{\mathrm{tor}}=\{0\}$ unless $q-1$ divides $s_{ir_i}$ according to \cite[Prop.~1.11.2]{AT90}, the assumption $s_{ir_i}$ is not divisible by $q-1$ implies that $\mathfrak{Q}_i=(Q_{i1},\dots,Q_{ir_i})\in k[t]^r$ such that $\mathfrak{Q}_i$ satisfying (\ref{Eq:Condition_of_Q_1}) automatically satisfies (\ref{Eq:Condition_of_Q_2}). Thus, we complete the first assertion.
    
    For the second assertion, we first note that $P$ being $q$-admissible implies that $s_{ir_i}$ is not divisible by $q-1$. Additionally, it is clear that $S=\{\mathfrak{s}_0=(w),\mathfrak{s}_1,\dots,\mathfrak{s}_m\}\subset I(w)$ forms a $g$-independent set. Thus, the desired $k$-linearly independent result of $\infty$-adic MZVs follows directly from Proposition~\ref{Prop:Specialization}, Theorem~\ref{Thm:Main_Thm}, and the first assertion.
\end{proof}

\begin{rem}
    For $w\in\mathbb{Z}_{>0}$, let $S$ be the collection of all indices of depth less than or equal to two and of weight $w$ such that the last entry of each index in $S$ is not divisible by $q-1$. Then, the same argument as that in the proof of Proposition~\ref{Prop:Non-Torsion} recovers \cite[Thm~3.1.1]{C16}.
\end{rem}

Next, we present another application of Theorem \ref{Thm:Main_Thm}. 
Let $r,~w\in\mathbb{Z}_{>0}$ and $\mathcal{Z}_w^r$ be the $k$-vector space spanned by $\infty$-adic MZVs of weight $w$ and depth $r$. Also, recall that $\mathcal{Z}_w^{1,r}=\mathcal{Z}_w^{1}+\mathcal{Z}_w^r$. Then, we have the following: 

\begin{cor}\label{Cor:Lower_Bound}
    Let $w,r\in\mathbb{Z}_{>0}$ be given. Then, we have
    $$\dim_k\mathcal{Z}_{w}^{1,r}\geq 1+ \lfloor\frac{w-1-\lfloor\frac{w-1}{q-1}\rfloor}{r-1}\rfloor$$
    where $\lfloor -\rfloor$ is the floor function. In particular,
    $$\dim_k\mathcal{Z}_{w}^r\geq \lfloor\frac{w-1-\lfloor\frac{w-1}{q-1}\rfloor}{r-1}\rfloor.$$
\end{cor}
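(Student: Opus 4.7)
The plan is to apply Theorem \ref{Thm:Main_Thm} to an explicitly constructed $g$-independent family consisting of $(w)$ together with several depth-$r$ indices whose last entries avoid divisibility by $q-1$, and then read off the dimension bound from the size of that family.

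\textbf{Combinatorial set-up.} Set $N := (w-1) - \lfloor (w-1)/(q-1)\rfloor$, the number of elements in $\{1,\dots,w-1\}$ not divisible by $q-1$, and list them in increasing order as $n_1<n_2<\cdots<n_N$. Put $m := \lfloor N/(r-1)\rfloor$. For $1\leq i\leq m$, define the consecutive block
\[
    \mathcal{P}_i := \{n_{(i-1)(r-1)+1},\,n_{(i-1)(r-1)+2},\,\dots,\,n_{i(r-1)}\}\subset\{1,\dots,w-1\}.
\]
The $\mathcal{P}_i$ are pairwise disjoint $(r-1)$-element subsets, and by construction $\min\mathcal{P}_i=n_{(i-1)(r-1)+1}$ is not divisible by $q-1$.

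\textbf{Applying the main theorem.} Put $\mathfrak{s}_0:=(w)$ and $\mathfrak{s}_i:=g^{-1}(\mathcal{P}_i)$ for $1\leq i\leq m$. Each $\mathfrak{s}_i$ with $i\geq 1$ has depth exactly $r$, and its last entry is $\min\mathcal{P}_i$, which is not divisible by $q-1$. Because $g(\mathfrak{s}_0)=\{0\}$ is disjoint from $\{1,\dots,w-1\}$ and the $\mathcal{P}_i$ are pairwise disjoint, the set $S:=\{\mathfrak{s}_0,\dots,\mathfrak{s}_m\}$ is $g$-independent. Taking the Anderson--Thakur data $\mathfrak{Q}_i := (H_{s_{i1}-1},\dots,H_{s_{ir_i}-1})\in A[t]^{r_i}$, condition (\ref{Eq:Condition_of_Q_1}) is the standard degree estimate for the $H_n$, condition (\ref{Eq:Condition_of_Q_2}) holds by the first assertion of Proposition \ref{Prop:Non-Torsion} (since the last entry of each $\mathfrak{s}_i$ is not divisible by $q-1$), and $\mathscr{L}_{[i],j}(\theta)\neq 0$ follows from Proposition \ref{Prop:Specialization}(2) combined with the non-vanishing of $\infty$-adic MZVs from \cite{Th09a}. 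Theorem \ref{Thm:Main_Thm} then gives that
\[
    \{\zeta_A(w),\,\zeta_A(\mathfrak{s}_1),\,\dots,\,\zeta_A(\mathfrak{s}_m)\}
\]
is $k$-linearly independent.

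\textbf{Dimension estimates.} These $m+1$ values all lie in $\mathcal{Z}_w^{1,r}$, so $\dim_k\mathcal{Z}_w^{1,r}\geq m+1=1+\lfloor N/(r-1)\rfloor$, which is the first inequality. Removing $\zeta_A(w)$ leaves a $k$-linearly independent subfamily of $\mathcal{Z}_w^r$, so $\dim_k\mathcal{Z}_w^r\geq m=\lfloor N/(r-1)\rfloor$.

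There is no serious obstacle beyond Theorem \ref{Thm:Main_Thm} itself; the argument is essentially a counting calculation. The only modest creative step is the choice of the blocks $\mathcal{P}_i$: by grouping the non-divisible integers $n_1,\dots,n_{m(r-1)}$ into consecutive $(r-1)$-chunks, one guarantees that every block has its minimum non-divisible by $q-1$ while avoiding the need for a full partition of $\{1,\dots,w-1\}$ by size-$(r-1)$ blocks (which would fail unless $(r-1)\mid(w-1)$).
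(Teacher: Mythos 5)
Your proposal is correct and follows essentially the same route as the paper: both proofs select $\lfloor N/(r-1)\rfloor$ pairwise disjoint $(r-1)$-element subsets of the integers in $\{1,\dots,w-1\}$ not divisible by $q-1$, apply $g^{-1}$ to obtain a $g$-independent family of depth-$r$ indices together with $(w)$, and invoke Theorem~\ref{Thm:Main_Thm} with Proposition~\ref{Prop:Specialization}(2). Your explicit choice of consecutive blocks and your verification of conditions (\ref{Eq:Condition_of_Q_1}), (\ref{Eq:Condition_of_Q_2}) and the non-vanishing hypothesis are just slightly more detailed versions of steps the paper leaves implicit.
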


\begin{proof}
    Consider the set 
    $$\mathcal{S}:=\{n\in\mathbb{Z}_{>0}\mid 1\leq n\leq w-1,~n~\mbox{is not divisible by}~q-1\}.$$ 
    Clearly, the cardinality of $\mathcal{S}$ is $$|\mathcal{S}|:=w-1-\lfloor\frac{w-1}{q-1}\rfloor.$$ 
    Let $\ell:=\lfloor|\mathcal{S}|/(r-1)\rfloor$. Then, it is clear that we can find $\ell$ many subsets $\mathcal{S}_1,\dots,\mathcal{S}_\ell$ of $\mathcal{S}$ with cardinality $r-1$ and $\mathcal{S}_i\cap\mathcal{S}_j=\emptyset$ for all $i\neq j$. Now, we set $\mathfrak{s}_i:=g^{-1}(\mathcal{S}_i)$ for all $1\leq i\leq\ell$. Then, $\dep(\mathfrak{s}_i)=r$ and $\{\mathfrak{s}_0:=(w),\mathfrak{s}_1,\dots,\mathfrak{s}_\ell\}$ forms a $g$-independent set. Thus, Theorem~\ref{Thm:Main_Thm} together with the second part of Proposition~\ref{Prop:Specialization} shows that $\{\zeta_A(w),\zeta_A(\mathfrak{s}_1),\dots,\zeta_A(\mathfrak{s}_\ell)\}$ is a $k$-linearly independent set. The desired lower bound of $\dim_k\mathcal{Z}_{w}^{1,r}$ follows immediately.
\end{proof}

We obtain the following linearly independent set between the same-weight $\infty$-adic MZVs and the CMPLs at rational points using Proposition~\ref{Prop:Non-Torsion}.

\begin{eg}
    Let $q=5$, $w=6$, $\mathcal{P}_1=\{1,3,5\}$ and $\mathcal{P}_2=\{2,4\}$. Then, $P=\{\mathcal{P}_1,\mathcal{P}_2\}$ is a $q$-admissible partition of $\{1,\dots,5\}$. Let $\mathfrak{s}_1=g^{-1}(\mathcal{P}_1)=(1,2,2,1)$ and $\mathfrak{s}_2=g^{-1}(\mathcal{P}_2)=(2,2,2)$. Let $u\in k$ such that $\Li_6(z)$ converges at $z=u$ and $\Li_6(u)\neq 0$. Then, 
    \[
        \{\Li_6(u),\zeta_A(1,2,2,1),\zeta_A(2,2,2)\}
    \]
    is a $k$-linearly independent set.
\end{eg}

We end our exposition by noting some further studies on the algebraic independence of $\infty$-adic MZVs by Chang and Yu \cite{CY07} and Mishiba \cite{Mi15a, Mi17}.
\begin{rem}\label{Rem:Alg_Ind}
 In \cite{CY07}, Chang and Yu proved that the following elements are algebraically independent over $\overline{k}$:  
\[
\tilde{\pi}, \zeta_A(n_1), \zeta_A(n_2), \ldots, \zeta_A(n_d)
\]
where $n_1, \ldots, n_d$ are $d$ distinct positive integers such that $n_i$ is not divisible by $q-1$ for each $i$ and $n_i/n_j$ is not an integral power of $p$ for each $i\neq j$. In \cite{Mi15a}, Mishiba showed that $\tilde{\pi}$, $\zeta(n)$, $\zeta(n,n)$ with $2n$ not divisible by $q-1$ are algebraically independent and in \cite{Mi17}, showed that the following $1+d(d+1)/2$ elements are algebraically independent over $\overline{k}$:
\[
    \{\tilde{\pi}\}\cup\{\zeta_A(n_i)\ |\ 1\leq i\leq d \}\cup\{\zeta_A(n_i, n_{i+1})\ |\ 1\leq i\leq d-1\}\cup\cdots\cup\{\zeta_A(n_1, \ldots, n_d)\} 
\]
where $n_1, \ldots, n_d$ satisfy the same condition as that in Chang and Yu's result. Moreover, Mishiba proved a refined version of \cite{Mi17}; we refer the reader to \cite{Mi15b} for details.
\end{rem}

\section*{Acknowledgements}
The authors are deeply grateful to Prof. C.-Y. Chang for directing them to this project and giving them fruitful suggestions. They also gratefully acknowledge Prof. Y. Mishiba for his careful reading and noting of a mistake in an earlier version of this manuscript. They are further thankful to Dr. O. Gezmi\c{s} and Prof. F. Pellarin for providing many valuable comments on this paper. The first author is partially supported by Prof. C.-Y. Chang's MOST Grant 107-2628-M-007-002-MY4. The second author would like to thank JSPS Overseas Research Fellowships and the National Center for Theoretical Sciences in Hsinchu for their support.


\begin{thebibliography}{Utah}
\bibitem[A86]{A86} G. W. Anderson, {\it $t$-motives}, Duke Math. J. {\bf 53} (1986), no. 2, 457--502.

\bibitem[ABP04]{ABP04} G. W. Anderson, W. D. Brownawell and M. A. Papanikolas, {\it Determination of the algebraic relations among special $\Gamma$-values}, Ann. Math. {\bf 160}, no.2, (2004), 237--313.

\bibitem[AT90]{AT90} G.\ W.\ Anderson and D.\ S.\ Thakur, \textit{Tensor powers of the Carlitz module and zeta values}, Ann. Math. (2) \textbf{132} (1990), no. 1, 159--191.

\bibitem[AT09]{AT09} G.\ W.\ Anderson and D.\ S.\ Thakur, \textit{Multizeta values for $\mathbb{F}_q[t]$, their period interpretation, and relations
between them}, Int. Math. Res. Not. (2009), no. 11, 2038--2055.


\bibitem[BK97]{BK97} D. J. Broadhurst and D. Kreimer, {\it Association of multiple zeta values with positive knots via Feynman diagrams up to 9 loops}, Phys. Lett. {\bf B 393} (1997), 403--412.

\bibitem [Ca35]{Ca35}
L.\ Carlitz, \textit{On certain functions connected with polynomials in a Galois field}, Duke Math. J. \textbf{1} (1935), no. 2, 137--168.

\bibitem[C14]{C14}
C.-Y.\ Chang, \textit{Linear independence of monomials of multizeta values in positive characteristic}, Compos. Math. \textbf{150} (2014), 1789--1808.

\bibitem[C16]{C16} C.-Y. Chang, {\it Linear relations among double zeta values in positive characteristic} Cambridge J. Math. {\bf 4} (2016), No.3, 289--331.

\bibitem[CM20]{CM20} C.-Y. Chang and Y. Mishiba, {\it On a conjecture of Furusho over function fields}, to appear in Invent. Math.

\bibitem[CPY19]{CPY19} C.-Y. Chang, M. A. Papanikolas and J. Yu, {\it An effective criterion for Eulerian multizeta values in positive characteristic}, J. Eur. Math. Soc. (2) {\bf 45}, (2019), 405--440. 

\bibitem[CY07]{CY07} C.-Y. Chang and J. Yu, {\it Determination of algebraic relations among special zeta values in positive characteristic}, Adv. Math. {\bf 216} (2007) 321--345.

\bibitem[Ch17]{Ch17} H.-J. Chen, {\it Anderson-Thakur polynomials and multizeta values in positive characteristic}, Asian J. Math. 21 (2017), 1135--1152.


\bibitem[GKZ06]{GKZ06} H. Gangl, M. Kaneko and D. Zagier, {\it Double zeta values and modular forms}, Automorphic forms and Zeta functions, World Scientific Publishing, New York, (2006), 71--106.

\bibitem[Go96]{Go96}
D.\ Goss, {\it Basic structures of function field arithmetic}, Springer-Verlag, Berlin, 1996.

\bibitem[G97]{G97} A. B. Goncharov, {\it The double logarithm and Manin's complex for modular curves }, Math. Res. Lett. {\bf 4} (1997), 627--636.

\bibitem[G98]{G98} A. B. Goncharov, {\it  Multiple polylogarithms, cyclotomy and modular complexes}, Math. Res. Lett. {\bf 5} (1998), no. 4, 497--516. 

\bibitem[GP20]{GP20} O. Gezmi\c{s} and F. Pellarin, {\it Trivial multiple zeta values in Tate algebras}, arXiv:2008.07144, 2020.

\bibitem[H20]{H20} R. Harada, {\it Alternating multizeta values in positive characteristic}, to appear in Math. Z.

\bibitem[HJ16]{HJ16} U. Hartl and A.-K. Juschka, {\it Pink's theory of Hodge structures over function fields}, $t$-motives: Hodge structures, transcendence and other motivic aspects, European Mathematical Society Congress Reports (2020), 31--182. 

\bibitem[HP04]{HP04} U. Hartl and R. Pink, {\it Vector bundles with a Frobenius structure on the punctured unit disc}, Compos.Math. 140 (2004), no. 3, 689--716.


\bibitem[IO08]{IO08} K. Ihara, H. Ochiai, {\it Symmetry on linear relations for multiple zeta values}, Nagoya Math. J. {\bf 189} (2008), 49--62.


\bibitem[LRT14]{LRT14} J. A. Lara Rodriguez and D. Thakur, {\it Zeta-like multizeta values for $\mathbb{F}_q[t]$}, Indian J. Pure Appl. Math. 45 (5), 785-798 (2014).

\bibitem[Mi15a]{Mi15a} Y. Mishiba, {\it Algebraic independence of the Carlitz period and the positive characteristic multizeta values at $n$ and $(n,n)$}, Proc. Amer. Math. Soc. {\bf 143} (2015), 3753--3763.

\bibitem[Mi15b]{Mi15b} Y. Mishiba, {\it $p$-th power relations and Euler-Carlitz relations among multizeta values}, RIMS Kokyuroku Bessatsu (2015), B53: 13--29


\bibitem[Mi17]{Mi17} Y. Mishiba, {\it On algebraic independence of certain multizeta values in characteristic p}, J. Number Theory {\bf 173} (2017), 512--528.

\bibitem[P08]{P08} M. A. Papanikolas, {\it Tannakian duality for Anderson-Drinfeld motives and algebraic independence of Carlitz logarithms}, Invent. Math. {\bf 171}, no.1, (2008), 123--174.

\bibitem[PR03]{PR03} M. A. Papanikolas and N. Ramachandran, {\it A Weil-Barsotti formula for Drinfeld modules}, J. Number Theory 98 (2003), no. 2, 407--431.


\bibitem[Ta10]{Ta10} L. Taelman, {\it 1-t-motifs}, arXiv:0908.1503, 2010.


\bibitem[Th04]{Th04}
D.\ S.\ Thakur, \textit{Function field arithmetic}, World Scientific Publishing, River Edge NJ, 2004.

\bibitem[Th09a]{Th09a}
D.\ S.\ Thakur, \textit{Power sums with applications to multizeta and zeta zero distribution for $\mathbb{F}_q[t]$}, Finite Fields Appl. {\bf 15} (2009), no. 4, 534--552.

\bibitem[Th09b]{Th09b}
D.\ S.\ Thakur, \textit{Relations between multizeta values for $\mathbb{F}_q[t]$}, Int. Math. Res. Notices (2009), no.12, 2318--2346.

\bibitem[Th10]{Th10}
D.\ S.\ Thakur, \textit{Shuffle relations for function field multizeta values}, Int. Math. Res. Notices (2010), no.11, 1973--1980

\bibitem[To18]{To18}
G.\ Todd, \textit{A Conjectural Characterization for $\mathbb{F}_q(t)$-Linear Relations between Multizeta Values}, J. Number Theory \textbf{187}, 264--287 (2018).

\bibitem[W12]{W12} M. Waldschmidt, {\it Lectures on Multiple Zeta Values, IMSC 2011}, available at \url{http://www.math.jussieu.fr/~miw/articles/pdf/MZV2011IMSc.pdf}, 2012.

\bibitem[Z93]{Z93} D. Zagier, {\it Periods of modular forms, traces of Hecke operators, and multiple zeta values}, Research into automorphic forms and L functions (Japanese) (Kyoto, 1992). Surikaisekikenkyusho Kokyuroku No. 843 (1993), 162--170.

\bibitem[Z94]{Z94} D. Zagier, {\it Values of zeta functions and their applications}, in ECM volume, Progress in Math. {\bf 120}, (1994), 497--512.

\bibitem[Zh16]{Zh16} J. Zhao, {\it Multiple zeta functions, multiple polylogarithms and their special values}, Series on Number Theory and its Applications, 12. World Scientific Publishing Co. Pte. Ltd., Hackensack, NJ, (2016).

\end{thebibliography}
\end{document}